%

\documentclass[preprint,12pt]{elsarticle}




\usepackage{amssymb,color}
\usepackage{lineno,hyperref}
\modulolinenumbers[5]

\usepackage{amsmath,amsthm,amsfonts,amssymb,mathtools,mathrsfs,url}
\usepackage{paralist}
\usepackage{graphics} 
\usepackage{epsfig}   
\usepackage{graphicx}  \usepackage{epstopdf}


\newtheorem{theorem}{Theorem}[section]
\newtheorem{corollary}[theorem]{Corollary}
\newtheorem{lemma}[theorem]{Lemma}

\newtheorem{proposition}[theorem]{Proposition}

\newtheorem{remark}[theorem]{Remark}
\numberwithin{equation}{section}
\renewcommand{\Re}{\operatorname{Re}}


\newcommand{\BE}{\begin{equation}}
\newcommand{\EEQ}{\end{equation}}
\newcommand{\rfb}[1]{\mbox{\rm
		(\ref{#1})}\ifx\undefined\stillediting\else:\fbox{$#1$}\fi}

\newfont{\roma}{cmr10 scaled 1200}
\renewcommand{\cline}{{\mathbb C}}

\newcommand{\nline}  {{\mathbb N}}
\newcommand{\rline}  {{\mathbb R}}

\newcommand{\tline}  {{\mathbb T}}

\newcommand{\zline}  {{\mathbb Z}}

\newcommand{\dd}  {{\rm d}\hbox{\hskip 0.5pt}}
\renewcommand{\leq} {\leqslant}
\renewcommand{\geq} {\geqslant}
\newcommand{\mm}    {{\hbox{\hskip 0.5pt}}}
\newcommand{\m}     {{\hbox{\hskip 1pt}}}

\newcommand{\bluff} {{\hbox{\raise 15pt \hbox{\mm}}}}
\newcommand{\sbluff}{{\hbox{\raise 10pt \hbox{\mm}}}}
\newcommand{\Om}    {{\Omega}}

\renewcommand{\l}    {{\lambda}}

\newcommand{\FORALL} {{\hbox{$\hskip 11mm \forall \;$}}}

%

\newcommand{\Leloc}    {{L^2_{\rm loc}[0,\infty)}}

\newcommand{\prt}      {{\partial}}

\newcommand{\Ascr} {\mathcal{A}}

\newcommand{\Dscr} {\mathcal{D}}
\newcommand{\Hscr} {\mathcal{H}}

\newcommand{\Lscr} {\mathcal{L}}

\newcommand{\Oscr} {\mathcal{O}}
\newcommand{\Uscr} {\mathcal{U}}

\newcommand{\bbm}[1]{\left[\begin{matrix} #1 \end{matrix}\right]}
\newcommand{\sbm}[1]{\left[\begin{smallmatrix} #1
	\end{smallmatrix}\right]}
%


\bibliographystyle{elsarticle-num}

\begin{document}
	
\begin{frontmatter}
		
\title{Asymptotic behaviour of a linearized water waves \\
	   system in a rectangle\tnoteref{t1}}
		
		
\author[mymainaddress]{Pei Su \corref{mycorrespondingauthor}}
\cortext[mycorrespondingauthor]{Corresponding author}
\ead{pei.su@u-bordeaux.fr}
		
		
		
		
\address[mymainaddress]{Institut de Math\'ematiques de Bordeaux,\\
		Universit\'e de Bordeaux, \\
		351, Cours de la Lib\'eration - F 33 405 TALENCE, France}
\tnotetext[t1]{The author is member of the ETN network ConFlex,
   funded by the European Union's Horizon 2020 research and innovation 
   programme under the Marie Sklodowska-Curie grant agreement no. 765579.
   }

\begin{abstract}
We consider the asymptotic behaviour of small-amplitude gravity water
waves in a rectangular domain where the water depth is much smaller 
than the horizontal scale. The control acts on one lateral boundary, 
by imposing the horizontal acceleration of the water along that boundary, 
as a scalar input function $u$. The state $z$ of the system consists of 
two functions: the water level $\zeta$ along the top boundary, and its 
time derivative $\frac{\prt\zeta}{\prt t}$. We prove that the solution 
of the water waves system converges to the solution of the one dimensional 
wave equation with Neumann boundary control, when taking the shallowness 
limit. Our approach is based on a special change of variables and a 
scattering semigroup, which provide the possiblity to apply the 
Trotter-Kato approximation theorem. Moreover, we use a detailed analysis 
of Fourier series for the dimensionless version of the partial Dirichlet 
to Neumann and Neumann to Neumann operators introduced in 
\cite{Su2020stabilizability}.
\end{abstract}

\begin{keyword}
Linearized water waves equation, Dirichlet to Neumann map, 
Neumann to Neumann map, Operator semigroup, Trotter-Kato theorem.
\end{keyword}

\end{frontmatter}

\section{Introduction and main results} \label{intro} 
In this work we study the asymptotic behaviour of a system describing 
small-amplitude water waves in a rectangular domain, in the presence of 
a wave maker, where the horizontal scale $L$ is much larger than the 
typical water depth $h_0$. The construction of the water waves model 
begins from the so-called {\em Zakharov-Craig-Sulem formulation} (ZCS), 
which is a fully nonlinear and fully dispersive model in terms of 
the elevation of the free surface and the free surface velocity potential 
(see, for instance, Lannes' book \cite{lannes2013water}). 
Based on some assumptions on the nonlinearity and the topography of the 
fluid domain, described by the {\em shallowness parameter} 
\begin{equation}\label{mu}
\mu=\frac{h_0^2}{L^2}, 
\end{equation}
there are many asymptotic models in the shallow water regime. {\em The 
nonlinear shallow water equations} is an approximation of ZCS where all 
the terms of order $O(\mu)$ are dropped, so that it is a fully nonlinear 
and non-dispersive model. Moreover, {\em the Boussinesq equations} is an 
approximation of ZCS of order $O(\mu^2)$ with the weak nonlinearity 
assumption. The full justification (convergence) of the shallow water 
approximation of ZCS models mentioned above are provided in \cite[Chapter 
5 and Chapter 6]{lannes2013water} by considering the corresponding Cauchy 
problem in a strip domain that is unbounded in the horizontal direction. 
For more interesting asymptotic models, please refer to Lannes 
\cite{lannes2013water}, \cite{lannes2020modeling} and also thereins. 

Here, instead of considering a fluid filling an infinite strip, we consider 
the similar topic on the linearized water waves equation in a rectangular 
domain with a wave maker applied from the lateral boundary. Our aim is to 
describe the dynamics of this system when the shallowness parameter tends 
to zero. Now let us precisely state the problem.

The domain $\Om$ is bounded by a top free surface 
$\Gamma_s$ and a flat bottom $\Gamma_f$. The other two components of the 
fluid domain, denoted by $\Gamma_1$ and $\Gamma_2$, are vertical walls, 
see Figure \ref{fig1}. The fluid filling the rectangular domain 
\begin{equation*}
\Om\m=\m \left\{(x,y)\m\left|\m(x,y)\in (0,\pi L)\times (-h_0,0)\right\} 
\right. 
\end{equation*}
is assumed to be homogeneous, incompressible, inviscid and irrotational. 
There is a wave maker that acts at the left boundary of $\Om$, by 
imposing the acceleration of the fluid in the horizontal direction, 
as a scalar input signal $u$.  

\begin{figure}[htbp]
	\centering\includegraphics[width=8.5cm, height=5.5cm]{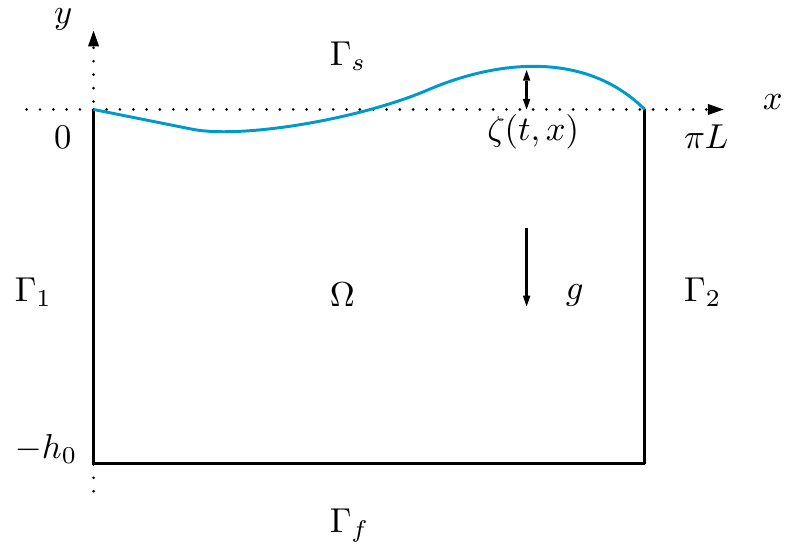}
	\caption{A rectangular domain $\Om$ filled with water} \label{fig1}
\end{figure} 

We consider the water waves system in the shallow water configurations, 
in the sense that $\mu\ll 1$. In order to study the asymptotic behaviour 
of the above system, we define the following dimensionless quantities,
\begin{equation}\label{tilde}
\tilde{x}=\frac{x}{L},\quad \tilde{y}=\frac{y}{h_0}, \quad\tilde{t}=
\frac{t}{L/\sqrt{gh_0}},\quad \tilde{\zeta}=\frac{\zeta}{a},\quad 
\tilde{\phi}=\frac{\phi}{aL\sqrt{g/h_0}}, \vspace{-0.5mm}
\end{equation}
where $a$ is the order of the surface variation, $\phi$ is the velocity 
potential of the fluid, $\zeta$ is the elevation of the top free surface 
and $g$ represents the gravity acceleration. The quantities in \rfb{tilde} 
marked with a tilde are their corresponding dimensionless version. With 
the variables $\tilde x$ and $\tilde y$, the dimensionless domain, 
denoted by $\tilde \Om$, is 
\begin{equation}\label{Om}
\tilde \Om\m=\m \left\{(\tilde x,\tilde y)\m\left|\m(\tilde x,\tilde y)
\in (0,\pi)\times (-1,0)\right\}\right.. 
\end{equation}
For the sake of simplicity, we omit the tildes in what follows and from 
now we always use the dimensionless quantities. Moreover, to avoid any 
confusion we use the notation $\zeta_\mu$ and $\phi_\mu$, instead of $\zeta$ 
and $\phi$, to represent the unknown functions in the dimensionless equation. 
The governing equations of the water waves system described above \rfb{tilde}, 
for all $t\geq 0$, are \vspace{-0.5mm}
\begin{equation} \label{gravity-1}
\left\{ \begin{aligned}
  &\Delta_\mu\m\phi_\mu(t,x,y) \m=\m 0\quad&\quad (\ (x,y)\in\Om),\\
  &\frac{\prt\zeta_\mu}{\prt t}(t,x) \m-\m \frac{1}{\mu}\m \frac{\prt\phi_\mu}
    {\prt y}(t,x,0)=0\quad&\quad (\ x \in (0,\pi)),\\
  &\frac{\prt\phi_\mu}{\prt t}(t,x,0)+\zeta_\mu(t,x) \m=\m 0\quad&\quad 
    (\ x\in(0,\pi)),\\
  &\frac{\prt\phi_\mu}{\partial x}(t,0,y) \m=\m-h(y)v(t)\quad
  &\quad (\ y \in (-1,0)),\\ &\frac{\prt\phi_\mu}{\prt y}
  (t,x,-1)=0=\m \frac{\prt\phi_\mu}{\prt x}(t,\pi,y)\quad&\quad
  (\ (x,y)\in\Om), \end{aligned} \right.
\end{equation}
where $v$ is the velocity produced by the wave maker. In the above equations 
$\Delta_\mu=\mu\frac{\prt^2}{\prt x^2}+\frac{\prt^2}{\prt y^2}$ is called 
the ''twisted'' Laplace operator (see \cite{lannes2013water}), and the function 
$h$ represents the profile of the velocity imposed by the wave maker. The system 
\rfb{gravity-1} is actually a fully linear and fully dispersive approximation 
of ZCS constrained in a rectangle.

The controllability properties of the system derived by \rfb{gravity-1}, 
as far as we know, are firstly studied in Russell and Reid \cite{reid1985boundary} 
and further in Mottelet \cite{mottelet2000controllability}. Now we recall 
here some recent works on the similar problem. Different with the control 
introduced in the system \rfb{gravity-1}, Alazard discussed in 
\cite{alazard2018stabilization} the stabilization of the nonlinear water 
waves system in a rectangle where the external pressure as the control signal 
acts on a part of the free surface, by absorbing the waves coming from the 
left. For the problem in a cubic domain, in an irregular domain and the case 
of the water waves with surface tension, please refer to Reid \cite{reid1986open} 
and \cite{reid1995control}, Craig et al. \cite{craig2012water}, Alazard et al. 
\cite{alazard2011water} and \cite{alazard2018boundary}. 
Recently, for $u\in\Leloc$, we established in our paper \cite{Su2020stabilizability} 
the well-posedness of the system \rfb{gravity-1}, and further showed that 
it can be recast as a well-posed linear control system (for this concept, 
please refer to \cite{Su2020stabilizability}, Weiss \cite{Weiss1} or Tucsnak 
and Weiss \cite{Obs_Book}).

Observe that the free surface equations of \rfb{gravity-1} determine 
the whole system, which means that if we know $\psi_\mu(t,x)=\phi_\mu(t,x,0)$, 
thereby the velocity potential $\phi_\mu$ can be obtained by solving a 
boundary value problem for Laplacian. As explained in \cite{Su2020stabilizability}, 
with the help of the Dirichlet to Neumann and the Neumann to Neumann 
operators, the system \rfb{gravity-1} reduces to a second-order evolution 
equation in terms of $\zeta_\mu$. We thus propose the corresponding initial data 
\vspace{-0.5mm}
\begin{equation}\label{initial-1}
\zeta_\mu(0,x)=\zeta_0(x),\quad ~\frac{\prt\zeta_\mu}{\prt t}(0,x)=\zeta_1(x).
\end{equation}
Therefore, we take the acceleration $u=\frac{dv}{dt}$ as the input signal.
We will provide in Section \ref{3} more details about the formulation of the governing
equations \rfb{gravity-1}. 

To state our main result, we introduce the following wave equation defined on 
$(0,\pi)$ with Neumann boundary control, i.e. for all $t\geq0$, $x\in(0,\pi)$, 
\begin{equation}\label{limit-wave}
\left\{ \begin{aligned}
&\frac{\prt^2\zeta}{\prt t^2}(t,x)-\frac{\prt^2\m\zeta}{\prt x^2}(t,x)=0,\\
&\frac{\prt\m\zeta}{\prt x}(t,0)=u(t),\quad \frac{\prt\m\zeta}{\prt x}(t,\pi)=0,\\
&\zeta(0,x)=\zeta_0(x),\quad ~\frac{\prt\zeta}{\prt t}(0,x)=\zeta_1(x).
\end{aligned} \right. \vspace{-1mm}
\end{equation}

The main contribution brought in by this work is that we justify the passage 
to the limit from the linear water waves system \rfb{gravity-1} to the system 
\rfb{limit-wave} (i.e. showing that, in an appropriate sense, $\zeta_\mu\to\zeta$) 
with the same initial data $\zeta_0$ and $\zeta_1$, as the shallowness parameter 
$\mu$ goes to zero. 

Intuitively, the rectangular domain will reduce to a one dimensional interval 
when the fluid domain becomes thinner and thinner in the vertical direction. 
From another point of view, the dispersion relation (that is the relation 
between $\omega$ and $\kappa$ when the solution takes the form  
${\rm e}^{{\rm i}(\kappa x-\omega t)}$) of the linearized water waves is 
$\omega^2=g\kappa\tanh \kappa h_0$, where $\omega$ is the angular frequency, 
$\kappa$ is the wave number and $h_0$ is the typical depth of the fluid domain. 
For more details about this, we refer to \cite[Chapter 1]{lannes2013water} 
and to Whitham \cite[Chapter 13]{whitham2011linear}). It is obvious that 
the dispersion relation is approximately $\omega^2\sim gh_0\kappa^2$ as 
$\kappa h_0\to 0$ and the phase speed $c_0=\sqrt{gh_0}$ becomes independent 
of $\kappa$. The dispersive effects drop out in this limit, and in one 
dimension, this is exactly the property of the wave equation. 

In \cite{Su2020stabilizability} we assumed that the shape function 
$h$ satisfies zero mean condition $\int_{-1}^0 h(y)\m \dd y=0$, 
to ensure the conservation of the volume of the water. This condition should 
be removed in this paper since the system under consideration is in the 
shallow water regime, where the velocity of the fluid is independent of the 
vertical variable (see, for instance, \cite{lannes2013water} and 
\cite{whitham2011linear}). In this case, $h$ should be a constant, which means 
that the velocity or the acceleration is homogeneously imposed by the wave
maker from the left edge. Without loss of generality, we might as well take 
$h=1$.

Before stating our main results, we need some notations. Let $\Oscr
\subset\rline^n$ be an open set, we use the notation $W^{k, 2}(\Oscr)$
($k\in\nline$) for the {\em Sobolev space} formed by the distributions 
$f\in\Dscr'(\Oscr)$ having the property that $\partial^\alpha f\in 
L^2(\Oscr)$ for every multi-index $\alpha\in\zline^n$ with 
$\alpha_j\geq 0$ and $|\alpha|\leq k$. For $n=1$, let $W^{s,2}(\Oscr)$
($s>0$) denote the fractional order Sobolev spaces obtained by 
interpolation via fractional powers of a positive operator (see, 
for instance, Lions and Magenes \cite{LiMa}).

Here is our main result:

\begin{theorem}\label{result}
For $u\in\Leloc$ and for any initial data $\zeta_0\in W^{1,2}[0,\pi]$
and $ \zeta_1\in L^2[0,\pi]$, let $\zeta_\mu$ be the solution of the free surface 
equations of \rfb{gravity-1} with the initial data \rfb{initial-1}, satisfying
$$ \zeta_\mu\in C([0,\infty); W^{\frac{1}{2},2}[0,\pi])
    \cap C^1([0,\infty); L^2[0,\pi]).$$
Let $\zeta$ be the weak solution of the system \rfb{limit-wave} satisfying
$$ \zeta\in C([0,\infty); W^{1,2}[0,\pi])\cap C^1([0,\infty); L^2[0,\pi]).$$
Then, for every $\tau>0$, we have \vspace{-1mm}
\begin{equation*}
  \begin{aligned}
  &\lim_{\mu\to0} \sup_{t\in[0,\tau]} \lVert \zeta_\mu-
  \zeta\rVert_{W^{\frac{1}{2},2}[0,\pi]}=0,\\
  &\lim_{\mu\to0} \sup_{t\in[0,\tau]} \left\lVert 
  \frac{\prt\zeta_\mu}{\prt t}-\frac{\prt\zeta}{\prt t} 
  \right\rVert_{L^2[0,\pi]}=0. \end{aligned} \vspace{-0.5mm}
\end{equation*}
\end{theorem}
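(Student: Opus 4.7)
The plan is to apply the Trotter-Kato approximation theorem to a family of strongly continuous semigroups that simultaneously encode the dynamics and the boundary input of \rfb{gravity-1} and of \rfb{limit-wave}. First, using the Dirichlet to Neumann and Neumann to Neumann operators recalled in \cite{Su2020stabilizability}, I would reduce \rfb{gravity-1} to a second-order evolution equation for $\zeta_\mu$ alone and then rewrite it, together with the wave equation \rfb{limit-wave}, as a first-order Cauchy problem for the pair $z_\mu=(\zeta_\mu,\prt_t\zeta_\mu)$, respectively $z=(\zeta,\prt_t\zeta)$. Since the input $u$ enters only on the left lateral boundary, the two evolutions cannot be compared by a direct operator-semigroup argument; instead I would absorb $u$ into the state by adjoining to $z_\mu$ the future-input coordinate $u(\cdot+t)\in\Le$ and using the left-shift semigroup on that additional factor. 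The resulting scattering-type semigroups $\mathbb T_\mu(t)$ and $\mathbb T_0(t)$ are then autonomous on one common Hilbert space of the form $\Hscr\times\Le$, and the original boundary control appears as an unbounded but $\mu$-independent coupling between the two factors.

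Once this common framework is fixed, the proof reduces to checking the two hypotheses of Trotter-Kato: uniform boundedness of $\{\mathbb T_\mu(t)\}$ on compact time intervals, and strong convergence on a core of the resolvents $(\lambda I-\mathbb A_\mu)^{-1}\to(\lambda I-\mathbb A_0)^{-1}$ for some $\lambda$ in a joint resolvent set. The uniform bound should follow from energy identities already established in \cite{Su2020stabilizability} together with the standard contractivity of the shift on $\Le$. The heart of the argument is the resolvent convergence, which I would obtain by a detailed Fourier analysis of the Dirichlet to Neumann and Neumann to Neumann operators associated with the twisted Laplacian $\Delta_\mu$ on the rectangle $\Om$. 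Thanks to the geometry and the lateral Neumann conditions, separation of variables diagonalizes both operators in the cosine basis $\{\cos(nx)\}_{n\geq 0}$, with explicit symbols built from $\sqrt{\mu}\,n\tanh(\sqrt{\mu}\,n)$ and $\sqrt{\mu}\,n/\sinh(\sqrt{\mu}\,n)$. Taylor expansion near zero then produces $\frac{1}{\mu}G_\mu\to -\prt_x^2$ with homogeneous Neumann data on the test modes, while the Neumann to Neumann symbol tends to a constant multiplier, which matches exactly the structure required by \rfb{limit-wave}.

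The main obstacle is twofold. First, to upgrade this mode-by-mode convergence into strong operator convergence uniformly in $\mu$, I will need a dominating envelope for the Fourier symbols that is independent of $\mu$; the difficulty is that $\sqrt{\mu}\,n\tanh(\sqrt{\mu}\,n)$ behaves like $\mu n^2$ for $n\ll 1/\sqrt{\mu}$ and like $\sqrt{\mu}\,n$ for $n\gg 1/\sqrt{\mu}$, so a splitting of the sum at this cross-over scale combined with the monotonicity of $\tanh$ should produce the required uniform control. Second, the natural energy space for $\zeta_\mu$ is $W^{\frac{1}{2},2}[0,\pi]$, built from the square root of $G_\mu$, whereas the natural energy space for $\zeta$ is $W^{1,2}[0,\pi]$; this mismatch is what forces the asymmetric regularity in the statement, and I would resolve it by formulating Trotter-Kato in the weaker topology $W^{\frac{1}{2},2}[0,\pi]\times L^2[0,\pi]$ and using the extra regularity of $\zeta$ only to ensure that the initial datum sits in a core on which the Fourier estimates above give resolvent convergence. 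Once these two points are settled, Trotter-Kato yields strong convergence of $\mathbb T_\mu(t)(z_0,u)$ to $\mathbb T_0(t)(z_0,u)$ uniformly on compact time intervals, which is exactly the two limits stated in the theorem.
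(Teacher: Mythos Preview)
Your overall architecture---reduction to a second-order evolution equation, adjoining the left-shift semigroup to absorb $u$, resolvent convergence via the explicit Fourier symbols of the Dirichlet-to-Neumann and Neumann-to-Neumann maps, then Trotter--Kato---is exactly the paper's strategy. However, there is a real gap in the step where you propose to ``formulate Trotter--Kato in the weaker topology $W^{\frac12,2}[0,\pi]\times L^2[0,\pi]$.'' The limit (wave) group is \emph{not} bounded on that space: with $\zeta_1=0$ and $\zeta_0$ concentrated on a high mode $\varphi_k$ one has $\|\prt_t\zeta(t)\|_{L^2}^2\sim k^2|\langle\zeta_0,\varphi_k\rangle|^2$ at generic times, while $\|(\zeta_0,0)\|_{W^{1/2,2}\times L^2}^2\sim k|\langle\zeta_0,\varphi_k\rangle|^2$, so $\mathbb T_0(t)$ is unbounded there and Trotter--Kato cannot even be stated. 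On the water-waves side the groups are unitary only in the $\mu$-dependent norm of $H_{\mu,\frac12}\times H$, and the equivalence constants between that norm and the fixed $W^{\frac12,2}$-norm degenerate as $\mu\to 0$; the energy identities you cite therefore do not give a $\mu$-uniform bound on the common space.

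The missing idea, which the paper supplies, is a change of variables that eliminates the norm mismatch: set $\alpha_\mu=\prt_t\zeta_\mu$, $\beta_\mu=(\tfrac{1}{\mu}A_\mu)^{1/2}\zeta_\mu$ and $\alpha=\prt_t\zeta$, $\beta=A_0^{1/2}\zeta$. Then $w_\mu=(\alpha_\mu,\beta_\mu)$ and $w=(\alpha,\beta)$ evolve under skew-adjoint generators on the \emph{same} space $X=H\times H$, all the (scattering) semigroups are unitary there, and Trotter--Kato applies cleanly. The price is twofold: the initial data $w_{\mu,0}=(\zeta_1,(\tfrac{1}{\mu}A_\mu)^{1/2}\zeta_0)$ now depends on $\mu$, so one must prove $w_{\mu,0}\to w_0$ in $X$ separately; and after obtaining $w_\mu\to w$ in $X$ one must translate back to $\zeta_\mu\to\zeta$ in $W^{\frac12,2}$, which requires another round of Fourier estimates comparing $(\tfrac{1}{\mu}A_\mu)^{1/2}$ with $A_0^{1/2}$. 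A smaller correction: $\tfrac{1}{\mu}B_\mu$ does not tend to a constant multiplier; it converges to $-\delta_0$ in $(W^{1,2}[0,\pi])'$, which is precisely what produces the Neumann boundary datum in \rfb{limit-wave}.
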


As we expected, according to the above theorem, the elevation of the water 
waves system behaves like the displacement of a string in one dimension. 
Although we have this relationship between the water waves system and the 
wave equation, their controllability properties are much different. 
As we know, the wave equation with Neumann boundary control 
is exactly controllable (see \cite[Part III, Chapter 8]{BDDM} 
and \cite{moreira1999exact} for the sufficiently large time, and 
\cite{lasiecka1989exact} for finite time interval), while the water 
waves system \rfb{gravity-1} is even not approximately controllable 
(see \cite{reid1985boundary} and \cite{mottelet2000controllability}).

Our approach is based on the famous Trotter-Kato approximation theorem 
(see, for instance, \cite[Chapter 3]{Pazy}) and a special change of 
variables, as well as a detailed analysis of Fourier series. Moreover, 
a scattering semigroup discussed in \cite{StWe} and \cite{GrCa} 
provides the possibility for us to apply the Trotter-Kato theorem to 
control systems. 

This paper is organized as follows. We derive, in Section \ref{op2}, the 
dimensionless Dirichlet to Neumann and Neumann to Neumann operators. 
Next we do some preparations in Section \ref{3} and propose a change of 
variables to rewrite the control system, which allows us to apply the 
Trotter-Kato theorem. In Section \ref{4} we prove two important convergence 
results on the resolvent of the evolution operators. Finally, in Section 
\ref{5} we focus on the proof of the main results.

\section{Nondimensionalization of the Dirichlet to Neumann and Neumann to 
	     Neumann maps } \label{op2}

In this section we derive the dimensionless form of the Dirichlet to Neumann 
and Neumann to Neumann maps, using the dimensionless quantities introduced 
in \rfb{tilde}. We provided in \cite{Su2020stabilizability} a detailed 
construction of these two important operators allowing us to recast 
\rfb{gravity-1} as a well-posed linear control system. Following Section 4 
in \cite{Su2020stabilizability}, we go back to the definition of these two 
operators, which is closely related to two boundary value problems for 
the Laplace operator in the rectangular domain $\Om$. Note that, $\Om$ being a 
rectangle, we use separation of variables and detailed analysis of 
Fourier series to construct the dimensionless version of all related operators.

Recalling the dimensionless quantities introduced in \rfb{tilde},
it is not difficult to see that we have \vspace{-0.5mm}
\begin{equation}\label{derivative}
\frac{\prt}{\prt x}=\frac{1}{L}\frac{\prt}{\prt \tilde x},\quad 
\frac{\prt}{\prt y}=\frac{1}{h_0}\frac{\prt}{\prt \tilde y},\quad 
\frac{\prt}{\prt t}=\frac{\sqrt{gh_0}}{L}\frac{\prt}{\prt \tilde t}. \vspace{-0.5mm}
\end{equation}
Based on the above relations, we define the ''twisted'' gradient and 
Laplace operators as follows ($\mu$ is given by \rfb{mu}): \vspace{-0.5mm}
$$ \nabla_\mu=\left(\sqrt{\mu}\frac{\prt}{\prt \tilde x},\  
   \frac{\prt}{\prt \tilde y}\right), \quad \Delta_\mu=\mu\frac{\prt^2}
   {\prt \tilde x^2}+\frac{\prt^2}{\prt \tilde y^2}. \vspace{-1mm}$$ 

\begin{remark}
{\rm 
The domain in this section is the one defined in \rfb{Om} and we still 
denote it by $\Om$ for simplicity.	}
\end{remark}
\subsection{Dirichlet and Neumann maps}

We present in this part the definition and some important remarks on 
the Dirichlet and Neumann maps in dimensionless version. Moreover, 
we state several results on the properties of these maps. The proofs of 
these results can be obtained by slight variations of the proofs of the 
corresponding results in \cite{Su2020stabilizability}, so that we omit 
the details here.

To state the definition of the Dirichlet and Neumann maps clearly, we need 
some notations. We set $H=L^2[0,\pi]$. It is known that the family 
$(\varphi_k)_{k\geq0}$ defined by \vspace{-1mm}
\begin{equation} \label{mare_definitie}
 \varphi_k=\left\{ \varphi_0=\frac{1}{\sqrt{\pi}}, \  \sqrt{\frac{2}{\pi}} 
 \cos(kx)\right\}\ \FORALL x\in [0,\pi],\vspace{-1mm}
\end{equation}
forms an orthonormal basis in $H$. The inner product in $H$ is 
denoted by $\langle\cdot, \cdot\rangle$ and the associated norm by 
$\lVert \cdot \rVert$. The Hilbert spaces $(\Hscr_\alpha)_{\alpha\geq 0}$ are 
defined by $\Hscr_0=H$ and \vspace{-1mm}
\begin{equation} \label{HALPHA}
\Hscr_\alpha \m=\m \left\{\eta\in H\ \ \left|\ \ \sum_{k\in\nline}
k^{2\alpha} |\langle\eta, \varphi_k\rangle|^2 < \infty\right\}\right. \qquad 
(\alpha\geq 0),
\end{equation}
with the inner products $\langle\eta,\psi\rangle_\alpha=
\sum_{k\in\nline}k^{2\alpha}\langle\eta, \varphi_k\rangle\m\overline{\langle\psi, 
\varphi_k\rangle}$, for all $\eta,\m\psi\in \Hscr_\alpha$.

\begin{proposition} \label{AZERO0}
With $\Om$ as in \rfb{Om}, we consider the operator
$A_1:\Dscr(A_1)\to L^2(\Om)$ defined by \vspace{-1mm}
$$ \Dscr(A_1) = \left\{f \in W^{2,2}(\Om)\ \ \left|\ \ \begin{matrix}
   f(x,0)=0,\ \frac{\partial f}{\partial y}(x,-1)=0& x\in (0,\pi)\\
   \frac{\partial f}{\partial x}(0,y)=0,\ \frac{\partial f}
   {\partial x}(\pi,y)=0 & y\in (-1,0) \end{matrix}\right.\right\},$$
$$ A_1 f = -\Delta_\mu f \FORALL f\in\Dscr(A_1).$$
Then $A_1$ is a strictly positive operator on $L^2(\Om)$.
\end{proposition}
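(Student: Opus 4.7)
The plan is to verify symmetry, self-adjointness, and the strict positive lower bound by combining Green's identity for the twisted Laplacian with a Poincar\'e-type inequality in the vertical direction coming from the Dirichlet condition on $\Gamma_s$.

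First, I would establish symmetry and nonnegativity by integration by parts. For $f,g\in\Dscr(A_1)$, applying Green's identity gives
\[
  \langle A_1 f, g\rangle_{L^2(\Om)}
  \m=\m \int_\Om \bigl(\mu\mm\partial_x f\mm\overline{\partial_x g}
  \m+\m \partial_y f\mm\overline{\partial_y g}\bigr) \dd x\dd y
  \m-\m \int_{\partial\Om} \bigl(\nabla_\mu f\cdot \nu_\mu\bigr)\mm\overline{g}\dd\sigma,
\]
where $\nu_\mu$ is the associated conormal. The four boundary integrals all vanish: the Dirichlet condition $g(x,0)=0$ kills the top integral on $\Gamma_s$; the Neumann condition $\partial_y f(x,-1)=0$ kills the integral on $\Gamma_f$; and $\partial_x f(0,y)=\partial_x f(\pi,y)=0$ kill the two lateral integrals on $\Gamma_1,\Gamma_2$. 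Swapping $f$ and $g$ yields the same expression, so $A_1$ is symmetric, and taking $g=f$ shows $A_1\geq 0$.

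Second, I would identify $A_1$ with the self-adjoint operator associated (via the standard Lax--Milgram or Friedrichs construction) to the closed, symmetric, nonnegative sesquilinear form
\[
  a(f,g) \m=\m \int_\Om \bigl(\mu\mm \partial_x f\mm\overline{\partial_x g}
  \m+\m \partial_y f\mm\overline{\partial_y g}\bigr) \dd x\dd y
\]
defined on the form domain $V=\{f\in W^{1,2}(\Om):\;f(\cdot,0)=0\mbox{ in the trace sense}\}$. Coercivity of $a$ on $V$ (for fixed $\mu>0$), established in the next step, combined with the fact that every element of $\Dscr(A_1)$ belongs to $V$ and that the weak formulation $a(f,g)=\langle A_1 f,g\rangle$ holds for all $g\in V$, pins down $A_1$ as the self-adjoint Friedrichs extension, not merely a symmetric restriction.

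Third, the key quantitative step is a vertical Poincar\'e inequality coming from the trace condition $f(x,0)=0$. For $f\in\Dscr(A_1)$ and fixed $x$,
\[
  f(x,y) \m=\m -\int_y^0 \partial_y f(x,s)\dd s, \qquad y\in(-1,0),
\]
so by Cauchy--Schwarz $|f(x,y)|^2\leq \int_{-1}^0 |\partial_y f(x,s)|^2\dd s$. Integrating over $\Om$ gives $\lVert f\rVert_{L^2(\Om)}^2\leq \lVert \partial_y f\rVert_{L^2(\Om)}^2$, and hence
\[
  \langle A_1 f,f\rangle_{L^2(\Om)} \m\geq\m \lVert \partial_y f\rVert_{L^2(\Om)}^2
  \m\geq\m \lVert f\rVert_{L^2(\Om)}^2,
\]
which is exactly the required strict positivity.

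I do not expect a serious obstacle: the Neumann conditions on the lateral walls and bottom kill all unwanted boundary terms, and the Dirichlet trace on $\Gamma_s$ supplies the Poincar\'e inequality using only the vertical derivative. The only mild subtlety is that the $x$-direction alone cannot yield coercivity (Neumann on both lateral sides leaves constants in the kernel of $\partial_x$), which is precisely why the argument must route through $\partial_y f$; fortunately this also means the lower bound is uniform in $\mu$, a property that will presumably be useful later when $\mu\to 0$.
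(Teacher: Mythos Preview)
Your argument is correct. The paper does not actually prove this proposition: the paragraph preceding it states that all proofs in this subsection are slight variations of those in \cite{Su2020stabilizability} and are omitted.

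One minor point on your Step~2: identifying $A_1$ (with its prescribed $W^{2,2}$ domain) with the Friedrichs operator of the form $a$ on $V$ requires knowing that weak solutions in $V$ actually lie in $W^{2,2}(\Om)$ and satisfy the stated Neumann conditions, i.e.\ an elliptic regularity result on the rectangle with these mixed boundary conditions; this is true but not entirely free. Given the paper's systematic reliance on explicit Fourier series, the argument in the cited reference most likely bypasses this by direct diagonalization: the tensor products $\varphi_k(x)\psi_l(y)$ (with $\varphi_k$, $\psi_l$ as defined in Section~\ref{op2}) form an orthonormal eigenbasis of $L^2(\Om)$ for $A_1$ with eigenvalues $\mu k^2+\bigl(\tfrac{(2l-1)\pi}{2}\bigr)^2\geq \pi^2/4$, from which self-adjointness and strict positivity are immediate. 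Your form-based route has the merit of delivering the $\mu$-independent lower bound $\langle A_1 f,f\rangle\geq\lVert f\rVert^2$ without any spectral computation, which, as you note, is pertinent to the later limit $\mu\to 0$.
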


We know from Proposition \ref{AZERO0} that the operator $A_1$ is invertible 
since it is strictly positive (see, for instance, \cite[Chapter 3]{Obs_Book}).
Moreover, we have $A_1^{-1}\in \Lscr(L^2(\Om), W^{2,2}(\Om))$. Based on this 
observation, we introduce the Dirichlet map in the following proposition.

\begin{proposition} \label{def-D}
For every $\eta\in H$, there exists a unique function $D_\mu\eta\in 
L^2(\Om)$ such that \vspace{-2mm}
\begin{equation}\label{def1}
  \int_\Om (D_\mu\eta)(x,y) \overline{g(x,y)}\m \dd x \m \dd y \m=
  -\int_0^\pi \eta(x) \overline{\frac{\partial(A_1^{-1}g)}{\partial y}
  (x,0)}\, \dd x \ \ \forall\m g\in L^2(\Om).
\end{equation}
Moreover, the operator $\eta\mapsto D_\mu\eta$ (called a {\em partial 
Dirichlet map}) is bounded from $H$ into $L^2(\Om)$.
\end{proposition}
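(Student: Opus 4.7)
The plan is to realize $D_\mu\eta$ as the Riesz representative of an antilinear functional on $L^2(\Om)$. For fixed $\eta\in H$, I would introduce the map
\begin{equation*}
F_\eta: L^2(\Om)\to\cline,\qquad F_\eta(g) \m=\m -\int_0^\pi \eta(x)\m \overline{\frac{\prt(A_1^{-1}g)}{\prt y}(x,0)}\, \dd x,
\end{equation*}
and verify that it is well-defined, antilinear, and bounded. Once this is done, the Riesz representation theorem delivers a unique $D_\mu\eta\in L^2(\Om)$ such that $\langle D_\mu\eta,g\rangle_{L^2(\Om)}=F_\eta(g)$ for all $g\in L^2(\Om)$, which is exactly \rfb{def1}. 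The continuity of $\eta\mapsto D_\mu\eta$ then drops out of the same estimate that establishes boundedness of $F_\eta$.

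The analytic content therefore sits in one estimate, which I would carry out as follows. By Proposition \ref{AZERO0}, $A_1^{-1}\in\Lscr(L^2(\Om),W^{2,2}(\Om))$, so for each $g\in L^2(\Om)$ we have $A_1^{-1}g\in W^{2,2}(\Om)$ with $\|A_1^{-1}g\|_{W^{2,2}(\Om)}\leq C_\mu\|g\|_{L^2(\Om)}$. The usual trace theorem applied to $\prt_y(A_1^{-1}g)\in W^{1,2}(\Om)$ yields $\prt_y(A_1^{-1}g)(\cdot,0)\in W^{1/2,2}(0,\pi)\hookrightarrow L^2(0,\pi)$ together with
\begin{equation*}
\left\lVert \frac{\prt(A_1^{-1}g)}{\prt y}(\cdot,0)\right\rVert_{L^2(0,\pi)} \m\leq\m C_\mu'\m \lVert g\rVert_{L^2(\Om)}.
\end{equation*}
Combining this with the Cauchy-Schwarz inequality applied to $F_\eta$ gives $|F_\eta(g)|\leq C_\mu'\lVert\eta\rVert\m\lVert g\rVert_{L^2(\Om)}$, proving both boundedness of $F_\eta$ and (after Riesz) the bound $\lVert D_\mu\eta\rVert_{L^2(\Om)}\leq C_\mu'\m\lVert\eta\rVert$.

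The only delicate point is checking that $\prt_y(A_1^{-1}g)$ really has a well-defined trace on the top side $\{y=0\}$ of the rectangle and that this trace belongs to $L^2(0,\pi)$ with a bound controlled by $\|g\|_{L^2(\Om)}$. Since $A_1^{-1}g\in W^{2,2}(\Om)$, this is a standard trace result on Lipschitz (in fact, rectangular) domains, and no extra work is required beyond noting that the constants depend on $\mu$ through $A_1$ but not on $\eta$ or $g$. Uniqueness of $D_\mu\eta$ is immediate from the Riesz theorem, completing the proof.
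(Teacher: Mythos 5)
Your proof is correct and follows the standard transposition argument: realize the identity \rfb{def1} as a Riesz representation, and hinge everything on the trace estimate $\bigl\lVert \partial_y(A_1^{-1}g)(\cdot,0)\bigr\rVert_{L^2(0,\pi)} \leq C_\mu\lVert g\rVert_{L^2(\Om)}$, which follows from $A_1^{-1}\in\Lscr(L^2(\Om),W^{2,2}(\Om))$ and the bounded trace $W^{1,2}(\Om)\to L^2(\partial\Om)$. This is the same route the paper takes (it refers the proof to \cite{Su2020stabilizability}, where the Dirichlet map is built precisely by this method of transposition), so there is nothing to add.
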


\begin{remark}\label{Dlaplace} {\rm 
For every $\eta\in H$, we have $D_\mu\eta\in C^\infty(\Om)$ and
$\Delta_\mu(D_\mu\eta)=0$. Moreover, if $D_\mu\eta\in 
C^1(\overline\Om)$, then $D_\mu\eta$ is the unique function
in $C^2(\Om)\cap C(\overline\Om)$ that satisfies, in the classical
sense, the following dimensionless boundary value problem: 
\begin{equation} \label{LaplaceDeta}
\left\{ \begin{aligned} 
  &\Delta_\mu (D_\mu\eta)(x,y) \m=\m 0 \quad&\quad
  ((x,y)\in\Om),\\ &(D_\mu\eta)(x,0) \m=\m \eta(x),\quad \frac{\partial
  (D_\mu\eta)}{\partial y}(x,-1) \m=\m 0\quad&\quad \qquad (x\in(0,\pi)),
  \\ &\frac{\partial (D_\mu\eta)}{\partial x}(0,y) \m=\m 0, \quad \frac
  {\partial (D_\mu\eta)}{\partial x}(\pi,y) \m=\m 0\quad&\quad\qquad
  (y\in (-1,0)). \end{aligned} \right.
\end{equation}
The above equations can be obtained by taking $g=\Delta_\mu f$ in \rfb{def1} 
and cleverly choosing the test function $f\in\Dscr(A_1)$. For more details about 
this, please refer to \cite{Su2020stabilizability}.}
\end{remark}
 
\begin{lemma} \label{Deta}	
For every $\eta\in H$, $D_\mu\eta$ is given, for every $x,y\in \Om$, by
\begin{equation*}
  (D_\mu\eta)(x,y) = \sum_{k\geq0} \frac{\langle\eta,\varphi_k
  \rangle}{\cosh{(\sqrt{\mu}k)}}\varphi_k(x)\cosh{[\sqrt{\mu}k(y+1)]},
\end{equation*}
where the functions $\varphi_k$ have been defined in \rfb{mare_definitie}.
Moreover, for every $\eta\in \Hscr_3$ we have $D_\mu\eta\in C^2(\overline\Om)$.
\begin{proof}
Note that for every $\eta\in H$, we have the Fourier expansion $\eta=\sum_{k\geq0}
\langle \eta, \varphi_k\rangle \varphi_k$. Therefore, the formula of $D_\mu\eta$ 
directly follows from Proposition \ref{def-D} and Remark \ref{Dlaplace}. The 
remaining proof is similar to the corresponding one in \cite{Su2020stabilizability}.
\end{proof}
\end{lemma}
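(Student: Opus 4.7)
The plan is to construct $D_\mu\eta$ by separation of variables tailored to the Fourier basis $(\varphi_k)$ from \rfb{mare_definitie}, and then to verify directly that the resulting series satisfies the weak identity \rfb{def1} that characterises $D_\mu$. The cosines $\varphi_k$ are precisely the eigenfunctions of $-\partial_x^2$ on $(0,\pi)$ with the Neumann conditions that appear on $x=0,\pi$ in \rfb{LaplaceDeta}, so looking for harmonic functions of the form $\varphi_k(x) Y_k(y)$ turns the equation $\Delta_\mu(\varphi_k Y_k)=0$ into $Y_k'' = \mu k^2 Y_k$. The bottom Neumann condition $Y_k'(-1)=0$ then selects $Y_k(y)=\cosh(\sqrt{\mu}k(y+1))$ (with $Y_0\equiv 1$), and matching the top data $u(x,0)=\eta(x)$ against the expansion $\eta=\sum_{k\geq 0}\langle\eta,\varphi_k\rangle\varphi_k$ forces the normalisation by $\cosh(\sqrt{\mu}k)$, yielding the series in the statement.

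To upgrade this formal candidate to a genuine element of $L^2(\Om)$ that coincides with $D_\mu\eta$, I would first use the uniform bound
$$ 0 \leq \frac{\cosh(\sqrt{\mu}k(y+1))}{\cosh(\sqrt{\mu}k)} \leq 1 \qquad (y\in[-1,0],\ k\geq 0), $$
which gives convergence of the partial sums in $L^2(\Om)$ for every $\eta\in H$, with the limit norm controlled by $\lVert\eta\rVert$. To identify this $L^2$ limit with $D_\mu\eta$, I would test \rfb{def1} on $g=A_1 f$ for $f\in\Dscr(A_1)$: the Neumann conditions on $f$ at $x=0,\pi$ match those satisfied by each $\varphi_k$ and kill the lateral boundary terms, while $Y_k'(-1)=0$ together with $\partial_y f(x,-1)=0$ eliminates the bottom contribution. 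Termwise integration by parts therefore leaves only the trace at $y=0$, whose sum over $k$ reproduces exactly $-\int_0^\pi \eta(x)\overline{\partial_y(A_1^{-1}g)(x,0)}\,\dd x$, and Proposition \ref{def-D} then identifies the series with $D_\mu\eta$.

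For the $C^2(\overline\Om)$ assertion I would differentiate the series termwise and appeal to the Weierstrass $M$-test. Every partial derivative up to order two of the factor $\varphi_k(x)\cosh(\sqrt{\mu}k(y+1))/\cosh(\sqrt{\mu}k)$ is pointwise bounded on $\overline\Om$ by a constant multiple of $k^2$, since $\lVert\varphi_k^{(j)}\rVert_\infty\leq \sqrt{2/\pi}\,k^j$ and both $\cosh(\sqrt{\mu}k(y+1))/\cosh(\sqrt{\mu}k)$ and $\sinh(\sqrt{\mu}k(y+1))/\cosh(\sqrt{\mu}k)$ stay bounded by $1$ on $[-1,0]$. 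Uniform convergence on $\overline\Om$ of the series and of all its partial derivatives up to order two therefore reduces to the absolute summability
$$ \sum_{k\geq 1} k^2 |\langle\eta,\varphi_k\rangle| \leq \bigg(\sum_{k\geq 1} k^{-2}\bigg)^{1/2}\bigg(\sum_{k\geq 1} k^6 |\langle\eta,\varphi_k\rangle|^2\bigg)^{1/2} < \infty, $$
which holds exactly when $\eta\in\Hscr_3$. The main delicate step is the identification in the second paragraph: one must keep track of the $\mu$-weighting in $\Delta_\mu=\mu\partial_x^2+\partial_y^2$ during the integration by parts so that the lateral boundary terms genuinely vanish, and confirm that the $k=0$ mode (the constant $\langle\eta,\varphi_0\rangle\varphi_0$) creates no exceptional issue on either side of \rfb{def1}. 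Once this bookkeeping is in place, everything else reduces to the uniform-in-$k$ bounds on the hyperbolic ratios above, which are robust enough to support both this lemma and the $\mu\to 0$ asymptotics to be studied in the sequel.
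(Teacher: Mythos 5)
Your proposal is correct and follows essentially the same route as the paper: separation of variables with the Neumann cosine basis produces the candidate series, which you identify with $D_\mu\eta$ by verifying the weak characterization \rfb{def1} against $g=A_1f$ (the $\mu$-weighted integration by parts kills the lateral and bottom boundary terms, leaving only the $y=0$ trace), and the $C^2(\overline\Om)$ assertion for $\eta\in\Hscr_3$ comes from termwise differentiation, the $O(k^2)$ bound on second derivatives, and Cauchy--Schwarz. The paper compresses all of this into a pointer to Proposition \ref{def-D}, Remark \ref{Dlaplace}, and \cite{Su2020stabilizability}; you fill in the details, but there is no substantive difference in strategy.
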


We set \vspace{-1mm}
\begin{equation*} 
  W^{1,2}_{top}(\Om) = \{f\in\Hscr^1(\Om)\ \ |\ \ f(x,0)=0,\
  x\in(0,\pi)\}.
\end{equation*}
Next we recall in what follows the definition of the Neumann map.
\begin{proposition}\label{def-N}
For every $v\in L^2[-1,0]$, there exists a unique function $N_\mu v\in
W^{1,2}_{top}(\Om)$ such that \vspace{-1mm}
\begin{equation*} 
  \int_\Om \nabla_\mu (N_\mu v)\cdot \overline{\nabla_\mu g}\, \dd x\, 
  \dd y \m=\m \int_{-1}^0 v(y) \overline{g(0,y)}\, \dd y \FORALL g\in
  W^{1,2}_{top}(\Om).
\end{equation*}
Moreover, the operator $N_\mu$, called a {\em partial Neumann map}, is
linear and bounded from $L^2[-1,0]$ to $W^{1,2}_{top}(\Om)$.
\end{proposition}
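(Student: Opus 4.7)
The plan is to apply the Riesz representation theorem on $W^{1,2}_{top}(\Om)$ equipped with the twisted-gradient inner product. First, I would verify that for any fixed $\mu>0$ the Hermitian form
$$ a_\mu(f,g) \m=\m \int_\Om \nabla_\mu f\cdot\overline{\nabla_\mu g}\,\dd x\,\dd y $$
defines an inner product on $W^{1,2}_{top}(\Om)$ whose induced norm is equivalent to the usual $W^{1,2}(\Om)$ norm on this subspace. Since $\lVert\nabla_\mu f\rVert^2 = \mu\lVert\prt_x f\rVert^2+\lVert\prt_y f\rVert^2$ and $\mu>0$, equivalence with the classical gradient seminorm is immediate. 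To promote the seminorm to a genuine norm I would invoke a Poincaré-type inequality: elements of $W^{1,2}_{top}(\Om)$ vanish on the top edge $\Gamma_s$, whose one-dimensional measure is positive, so there exists a constant $C_\mu$ with $\lVert f\rVert_{L^2(\Om)}\leq C_\mu\lVert\nabla_\mu f\rVert_{L^2(\Om)}$ for every $f\in W^{1,2}_{top}(\Om)$. Consequently $(W^{1,2}_{top}(\Om), a_\mu)$ is a Hilbert space.

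Next I would show that for each $v\in L^2[-1,0]$ the antilinear functional
$$ \Lambda_v(g) \m=\m \int_{-1}^0 v(y)\,\overline{g(0,y)}\,\dd y $$
is continuous on $W^{1,2}_{top}(\Om)$. By the standard trace theorem on the Lipschitz domain $\Om$, the restriction $g\mapsto g(0,\cdot)$ maps $W^{1,2}(\Om)$ boundedly into $L^2[-1,0]$ (in fact into $W^{1/2,2}[-1,0]$). Combined with Cauchy-Schwarz and the norm-equivalence step above, this yields $\lvert\Lambda_v(g)\rvert\leq K_\mu \lVert v\rVert_{L^2[-1,0]}\sqrt{a_\mu(g,g)}$ for some constant $K_\mu$ incorporating the trace constant together with $C_\mu$.

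The Riesz representation theorem applied to $\Lambda_v$ on the Hilbert space $(W^{1,2}_{top}(\Om), a_\mu)$ then produces a unique $N_\mu v\in W^{1,2}_{top}(\Om)$ satisfying $a_\mu(N_\mu v, g)=\Lambda_v(g)$ for every $g\in W^{1,2}_{top}(\Om)$, which is exactly the identity demanded by the proposition. Linearity of $N_\mu$ follows from $v\mapsto\Lambda_v$ being linear together with the uniqueness clause of Riesz, and the Riesz norm identity gives $\sqrt{a_\mu(N_\mu v, N_\mu v)}\leq K_\mu\lVert v\rVert_{L^2[-1,0]}$, which translates through norm equivalence into the required boundedness $N_\mu \in \Lscr(L^2[-1,0], W^{1,2}_{top}(\Om))$.

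The only delicate point is the Poincaré inequality for the twisted gradient, and for each fixed $\mu>0$ it reduces at once to the classical inequality for functions vanishing on a subset of $\prt\Om$ of positive measure, so the stated existence and boundedness really present no serious obstacle here. It is worth noting, however, that the constant $C_\mu$ generally deteriorates as $\mu\to 0$; tracking this $\mu$-dependence will presumably be essential for the shallowness-limit analysis announced in Section \ref{4}, but plays no role in the present proposition.
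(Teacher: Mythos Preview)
Your argument is correct and is precisely the standard Lax--Milgram/Riesz representation approach one expects here. Note that the paper itself omits the proof of this proposition, stating that it ``can be obtained by slight variations of the proofs of the corresponding results in \cite{Su2020stabilizability}''; your write-up is exactly the kind of variational argument that reference would contain, so there is nothing further to compare.
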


\begin{remark} \label{nicipedeparte} {\rm
The above proposition can be formulated also as follows: for every
$v\in L^2[-1,0]$, the dimensionless boundary value problem for Laplacian
\begin{equation} \label{Deltaf}
\left\{ \begin{aligned}
   & \Delta_\mu f(x,y) = 0  \quad&\quad((x,y)\in\Om),\\ & f(x,0) = 0,
   \quad \frac{\partial f}{\partial y}(x,-1) = 0\quad&\quad
   (x\in (0,\pi)),\\ & \frac{\partial f}{\partial x}(0,y) = -v,
   \quad \frac{\partial f}{\partial x}(\pi,y) = 0 \quad & \quad
   (y\in (-1,0)), \end{aligned} \right. 
\end{equation}
admits a unique weak solution $f=N_\mu v\in W^{1,2}_{top}(\Om)$. If 
$f\in C^2(\overline\Om)$ and $v\in C[-1,0]$, then $f=N_\mu v$ is 
the unique classical solution of \rfb{Deltaf}. }
\end{remark}

We note that the sequence $(\psi_k)_{k\in\nline}$ defined by
\begin{equation*} 
  \psi_k(y) = \sqrt{2}\cos{\left[(2k-1)\frac{\pi}{2}(y+1)\right]}
  \FORALL k\in\nline,\ y\in [-1,0],
\end{equation*}
is an orthonormal basis in $L^2[-1,0]$ (see \cite[Sect.~2.6]
{Obs_Book}). We define the Hilbert spaces $(\Uscr_\beta)_{\beta\geq 0}$ 
by $\Uscr_0=L^2[-1,0]$ and (for $\beta>0$) \vspace{-1mm}
\begin{equation*}
\Uscr_\beta \m=\m \left\{v\in \Uscr_0\ \ \left|\ \ \sum_{k\in
\nline} (2k-1)^{2\beta} \left|\langle v,\psi_k\rangle_{\Uscr_0}
\right|^2 < \infty\right\}\right. \m,
\end{equation*}
with the inner products given by $\langle v,w\rangle_{\Uscr_\beta} = 
\sum_{k\in\nline}(2k-1)^{2\beta} \langle v, \psi_k\rangle_{\Uscr_0}
\overline{\langle w, \psi_k\rangle_{\Uscr_0}}$, for every $v,\m w\in
\Uscr_\beta$. 

\begin{lemma} \label{Nv}
For every $v\in L^2[-1,0]$ and every $(x,y)\in\Om$ we have
\begin{equation}\label{Nvformula}
  (N_\mu v)(x,y) = \sum_{k\in\nline} a_k\cosh\left[\frac{(2k-1)}
  {2\sqrt{\mu}}\pi(x-\pi)\right]\cos{\left[(2k-1)\frac{\pi}{2}(y+1)\right]},
\end{equation}
where \vspace{-0.5mm}
$$ a_k \m=\m \frac{2\sqrt{2\mu}\langle v,\psi_k\rangle}{(2k-1)\pi
   \sinh{\left[\frac{2k-1}{2\sqrt{\mu}}\pi^2\right]}} 
   \FORALL k\in\nline. \vspace{-1mm}$$ 
Moreover, for every $v\in\Uscr_2$ we have $N_\mu v\in C^2(\overline{\Om})$.
\end{lemma}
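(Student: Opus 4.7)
The plan is to construct $N_\mu v$ explicitly by separation of variables for the boundary value problem \rfb{Deltaf}, then to verify that the resulting series coincides with the unique weak solution characterized by Proposition \ref{def-N}, and finally to obtain the $C^2$ regularity through termwise estimates on the differentiated series.

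First I would motivate the formula. Seeking product solutions $f(x,y)=X(x)Y(y)$ of $\Delta_\mu f=0$ yields $\mu X''(x)/X(x)=-Y''(y)/Y(y)=\lambda$. The $Y$-problem with $Y(0)=0$ and $Y'(-1)=0$ has precisely the eigenvalues $\lambda_k=[(2k-1)\pi/2]^2$ and eigenfunctions proportional to $\cos[(2k-1)(\pi/2)(y+1)]$, i.e.\ the $\psi_k$ (with $\psi_k(0)=0$ automatic since $2k-1$ is odd). The corresponding $X$-equation $\mu X''=\lambda_k X$ with $X'(\pi)=0$ then selects $X_k(x)=\cosh[(2k-1)\pi(x-\pi)/(2\sqrt\mu)]$. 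Substituting the ansatz $f(x,y)=\sum_k a_k X_k(x)\psi_k(y)/\sqrt2$ into the remaining condition $\partial_x f(0,\cdot)=-v$, expanding $v=\sum_k\langle v,\psi_k\rangle\psi_k$, and identifying Fourier coefficients produces exactly the formula for $a_k$ stated in \rfb{Nvformula}.

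Next I would check that this series defines an element of $W^{1,2}_{top}(\Om)$ satisfying the variational identity of Proposition \ref{def-N}. The central observation is that for $x\in[0,\pi]$ both $\cosh[(2k-1)\pi(x-\pi)/(2\sqrt\mu)]/\sinh[(2k-1)\pi^2/(2\sqrt\mu)]$ and the analogous quotient with $\sinh$ in the numerator are bounded by $\coth[\pi^2/(2\sqrt\mu)]$, uniformly in $k$. Combined with Parseval's identity on $[-1,0]$ and the factor $\sqrt\mu/(2k-1)$ present in $a_k$, this shows the partial sums form a Cauchy sequence in $W^{1,2}(\Om)$. Since each term is harmonic for $\Delta_\mu$ and vanishes on $\{y=0\}$, integration by parts against any $g\in W^{1,2}_{top}(\Om)$ collapses all interior contributions and leaves only the boundary integral $\int_{-1}^0 v(y)\overline{g(0,y)}\,\dd y$; by uniqueness the limit equals $N_\mu v$.

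For the $C^2(\overline\Om)$ claim under the hypothesis $v\in\Uscr_2$, I would differentiate the series at most twice in $x$ and $y$. Each $\partial_x$ supplies an extra factor $(2k-1)\pi/(2\sqrt\mu)$ and each $\partial_y$ an extra factor $(2k-1)\pi/2$, while the hyperbolic ratios remain $\coth$-bounded by the previous step; hence any second-order derivative is dominated pointwise on $\overline\Om$ by a constant times $(2k-1)\,|\langle v,\psi_k\rangle|$. Cauchy--Schwarz then gives
\begin{equation*}
\sum_{k\in\nline}(2k-1)\,|\langle v,\psi_k\rangle|\;\leq\;\left(\sum_{k\in\nline}\frac{1}{(2k-1)^2}\right)^{\!1/2}\|v\|_{\Uscr_2}<\infty,
\end{equation*}
so the differentiated series converges absolutely and uniformly on $\overline\Om$, yielding $N_\mu v\in C^2(\overline\Om)$. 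The main technical point throughout is precisely these uniform hyperbolic quotient bounds, since at the left wall $x=0$ the cosines attain their maximum and one must use $\coth\le\coth[\pi^2/(2\sqrt\mu)]$ rather than rely on the exponential damping that is available away from $x=0$.
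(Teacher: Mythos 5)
Your proposal is correct and follows essentially the same route that the paper indicates: separation of variables against the $y$-eigenfunctions $\psi_k$ produces the hyperbolic-cosine profiles in $x$, the Neumann condition at $x=0$ forces the coefficients $a_k$, and the $C^2$ regularity for $v\in\Uscr_2$ follows from the $\coth$-type uniform bound on the hyperbolic quotients together with a Cauchy--Schwarz estimate against $\sum_k(2k-1)^{-2}$. The paper's proof is terse (``using separation of variables we immediately obtain the formula \ldots\ we omit the remaining details''), and your write-up simply supplies the details the paper delegates to \cite{Su2020stabilizability}, including the correct observation that the worst uniform control of $\cosh[\nu_k(x-\pi)]/\sinh[\nu_k\pi]$ and its $\sinh$-analogue occurs at the wall $x=0$ and is handled by $\coth[\nu_k\pi]\leq\coth[\pi^2/(2\sqrt\mu)]$.
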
	
\begin{proof}
For every $v\in L^2[-1,0]$, $v=\sum_{k\in\nline} \langle v, \psi_k\rangle
\psi_k$. According to Proposition \ref{def-N} and Remark \ref{nicipedeparte},
using separation of variables we immediately obtain the formula \rfb{Nvformula}.
We omit the remaining details since it is almost the same with the one in 
\cite{Su2020stabilizability}. 
\end{proof}
\subsection{Dirichlet to Neumann and Neumann to Neumann maps}
In this subsection, we derive the dimensionless version of the Dirichlet 
to Neumann and Neumann to Neumann operators and study their related 
spectral properties.

\begin{corollary} \label{pe_deoarte}
Let $\gamma_1:C^1(\overline\Om)\to C[0,\pi]$ be the partial
Neumann trace operator defined by \vspace{-1mm}
$$ (\gamma_1f)(x) \m=\m \frac{\partial f}{\partial y}(x,0) \FORALL
f\in C^1(\overline\Om),\ x\in [0,\pi].$$
Then $ \tilde A_\mu$ defined by \vspace{-1mm}
$$\tilde A_\mu \eta \m=\m \gamma_1 D_\mu\eta \FORALL \eta\in \Hscr_3,$$
called a {\em partial Dirichlet to Neumann map}, is a linear bounded map from 
$\Hscr_3$ to $C[0,\pi]$.
\end{corollary}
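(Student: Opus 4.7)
The plan is to work from the explicit Fourier formula for $D_\mu\eta$ supplied by Lemma \ref{Deta}. Linearity of $\tilde A_\mu=\gamma_1\circ D_\mu$ is immediate from the linearity of its two factors, and Lemma \ref{Deta} already guarantees that $D_\mu\eta\in C^2(\overline\Om)$ whenever $\eta\in\Hscr_3$, so the Neumann trace $\gamma_1 D_\mu\eta$ is a well-defined continuous function on $[0,\pi]$. What remains is the norm bound from $\Hscr_3$ into $C[0,\pi]$.

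First, I would differentiate the series
$$(D_\mu\eta)(x,y) \m=\m \sum_{k\geq 0}\frac{\langle\eta,\varphi_k\rangle}{\cosh(\sqrt{\mu}k)}\m\varphi_k(x)\cosh[\sqrt{\mu}k(y+1)]$$
termwise in $y$ and evaluate at $y=0$. Since $\partial_y\cosh[\sqrt{\mu}k(y+1)]\big|_{y=0}=\sqrt{\mu}k\sinh(\sqrt{\mu}k)$, this produces the candidate formula
$$(\tilde A_\mu\eta)(x) \m=\m \sum_{k\geq 1}\sqrt{\mu}\,k\,\tanh(\sqrt{\mu}k)\,\langle\eta,\varphi_k\rangle\,\varphi_k(x).$$
To justify the termwise differentiation I would show uniform convergence on $\overline\Om$ of the series of $y$-derivatives: using the elementary bound $\sinh[\sqrt{\mu}k(y+1)]/\cosh(\sqrt{\mu}k)\leq\tanh(\sqrt{\mu}k)\leq 1$ valid for $y\in[-1,0]$, together with $|\varphi_k(x)|\leq\sqrt{2/\pi}$, the $k$-th term is dominated by $C\sqrt{\mu}\,k\,|\langle\eta,\varphi_k\rangle|$, and Cauchy--Schwarz against $\sum k^{-4}<\infty$ gives summability for every $\eta\in\Hscr_3$. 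The Weierstrass $M$-test then yields uniform convergence on $\overline\Om$, and standard differentiation under uniform convergence identifies the displayed sum with the actual $y$-derivative of $D_\mu\eta$ at $y=0$, i.e.\ with $\tilde A_\mu\eta$.

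Second, the same domination delivers the desired boundedness:
$$\|\tilde A_\mu\eta\|_{C[0,\pi]} \m\leq\m \sqrt{\tfrac{2}{\pi}}\,\sqrt{\mu}\sum_{k\geq 1} k\,|\langle\eta,\varphi_k\rangle| \m\leq\m \sqrt{\tfrac{2}{\pi}}\,\sqrt{\mu}\left(\sum_{k\geq 1}k^{-4}\right)^{1/2}\!\|\eta\|_{\Hscr_3}.$$
This is the required linear estimate, and as a side benefit it exhibits an explicit $\sqrt{\mu}$ prefactor which is likely to be useful later when passing to the shallow-water limit.

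The only point requiring real care is the rigorous passage from the series for $D_\mu\eta$ to its Neumann trace at $y=0$, but this is entirely taken care of by the uniform $M$-test above, which in turn rests only on the two elementary inequalities $|\tanh|\leq 1$ and $|\varphi_k|\leq\sqrt{2/\pi}$. No tool beyond those already used in the proofs of Lemmas \ref{Deta} and \ref{Nv} is needed, so I expect the argument to be short and self-contained, and essentially a minor variation of the corresponding result in \cite{Su2020stabilizability}.
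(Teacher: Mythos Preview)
Your argument is correct and is exactly the approach one expects: the paper itself omits the proof of this corollary, merely stating (at the start of Section \ref{op2}) that it is a slight variation of the corresponding result in \cite{Su2020stabilizability}, and your computation via the explicit Fourier series of Lemma \ref{Deta} together with the $M$-test and Cauchy--Schwarz is precisely that variation.
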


\begin{proposition}\label{DN}
The operator $\tilde A_\mu$ introduced in Corollary {\rm
\ref{pe_deoarte}} has a unique continuous extension to an operator
$A_\mu:\Hscr_1\to H$. Then we have 
$A_\mu\varphi_k=\lambda_k\varphi_k$ with $(\lambda_0=0)$ 
\begin{equation*}
  \lambda_k\m=\m \sqrt{\mu}k\tanh (\sqrt{\mu}k) \FORALL k\in\nline,
\end{equation*}
and \vspace{-0.5mm}
\begin{equation} \label{forma1serie}
  A_\mu\eta \m=\m \sum_{k\in\nline} \l_k \langle \eta,\varphi_k
  \rangle\varphi_k \FORALL \eta\in \Hscr_1.
\end{equation}
\end{proposition}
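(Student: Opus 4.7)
The plan is to read off the action of $\tilde A_\mu$ on the orthonormal basis $(\varphi_k)_{k\geq 0}$ directly from the series representation of $D_\mu\eta$ in Lemma \ref{Deta}, and then extend the resulting operator to $\Hscr_1$ by density. I would begin by fixing $\eta \in \Hscr_3$, so that Lemma \ref{Deta} guarantees $D_\mu\eta \in C^2(\overline\Om)$ and the partial Neumann trace $\gamma_1 D_\mu\eta$ is well defined in the classical sense. Differentiating the series of Lemma \ref{Deta} termwise in $y$ and evaluating at $y=0$ gives the formal identity
\begin{equation*}
  (\gamma_1 D_\mu\eta)(x) \m=\m \sum_{k\geq 0} \sqrt{\mu}\m k \tanh(\sqrt{\mu}\m k) \langle\eta,\varphi_k\rangle \varphi_k(x),
\end{equation*}
which, in particular, forces $\tilde A_\mu\varphi_k = \lambda_k\varphi_k$ with $\lambda_k = \sqrt{\mu}\m k \tanh(\sqrt{\mu}\m k)$ (and $\lambda_0=0$). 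To justify the termwise differentiation, I would verify absolute and uniform convergence of the differentiated series on $\overline\Om$: the $k$-th term is bounded in modulus by $C\sqrt\mu\m k|\langle\eta,\varphi_k\rangle|$, and for $\eta\in\Hscr_3$ the Cauchy--Schwarz inequality yields
$$ \sum_{k\geq 1} k|\langle\eta,\varphi_k\rangle| \m\leq\m \Bigl(\sum_{k\geq 1} k^{-4}\Bigr)^{1/2} \Bigl(\sum_{k\geq 1} k^6|\langle\eta,\varphi_k\rangle|^2\Bigr)^{1/2} \m<\m\infty.$$

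The extension to $\Hscr_1$ then follows from the elementary bound $\tanh(\sqrt\mu\m k)\leq 1$: for every $\eta\in\Hscr_3$ one has
$$ \lVert \tilde A_\mu\eta\rVert_H^2 \m=\m \sum_{k\geq 1} \lambda_k^2 |\langle\eta,\varphi_k\rangle|^2 \m\leq\m \mu \sum_{k\geq 1} k^2 |\langle\eta,\varphi_k\rangle|^2 \m=\m \mu\m \lVert\eta\rVert_{\Hscr_1}^2.$$
Since finite linear combinations of the $\varphi_k$ belong to $\Hscr_3$ and are dense in $\Hscr_1$, the bounded linear extension theorem produces a unique continuous extension $A_\mu:\Hscr_1\to H$. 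The series representation \rfb{forma1serie} then follows from continuity of $A_\mu$ by applying it to the partial Fourier sums of $\eta$, while the eigenvalue relation $A_\mu\varphi_k=\lambda_k\varphi_k$ is preserved under the extension because each $\varphi_k$ already lies in $\Hscr_3$.

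The real technical point, rather than a conceptual obstacle, is the termwise differentiation of the Fourier series for $D_\mu\eta$ at the boundary $y=0$. The factor $\partial_y \cosh[\sqrt\mu\m k(y+1)]\big|_{y=0} = \sqrt\mu\m k\sinh(\sqrt\mu\m k)$ produces, after division by $\cosh(\sqrt\mu\m k)$, a multiplier of size $O(\sqrt\mu\m k)$ in each summand; the regularity $\eta\in\Hscr_3$ supplies decay $|\langle\eta,\varphi_k\rangle|=o(k^{-3})$ on average, which is comfortably enough to ensure absolute convergence of the differentiated series and thus to identify the classical derivative of $D_\mu\eta$ at $y=0$ with the formal series defining $A_\mu\eta$.
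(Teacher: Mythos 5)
Your proposal is correct and is essentially the proof the paper has in mind: the paper itself only says the argument is ``completely similar'' to the dimensional case in \cite{Su2020stabilizability} and gives no details, and what you write is exactly that argument transported to the dimensionless setting. Reading off $\lambda_k = \sqrt{\mu}\m k\tanh(\sqrt{\mu}\m k)$ from termwise differentiation of the series in Lemma \ref{Deta}, justifying the termwise differentiation for $\eta\in\Hscr_3$ by the Cauchy--Schwarz estimate, and then extending from the dense subspace $\Hscr_3$ (or its span of the $\varphi_k$) to $\Hscr_1$ via the uniform bound $\lVert\tilde A_\mu\eta\rVert^2\leq\mu\lVert\eta\rVert_{\Hscr_1}^2$ is precisely the intended route. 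No gaps.
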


\begin{remark}
{\rm The dimensionless Dirichlet to Neumann map $A_\mu$ introduced 
in Proposition \ref{DN} is positive, but not strictly positive, which is 
the difference with the one discussed in \cite{Su2020stabilizability}.
This is induced, as explained in the introduction, by removing zero mean 
condition from the state space, so that the system fits in the shallow 
water configurations.}
\end{remark}

\begin{corollary} \label{NN-pre}
With $\gamma_1$ as in Corollary {\rm\ref{pe_deoarte}}, define
the operator $\tilde B_\mu$ by
$$\tilde B_\mu v \m=\m \gamma_1 N_\mu v \ \FORALL v\in\Uscr_2,$$
called a {\rm partial Neumann to Neumann map},  where $N_\mu$ is the Neumann map 
introduced in Proposition {\rm \ref{def-N}}. Then $\tilde B_\mu$ is a bounded 
linear operator from $\Uscr_2$ to $C[0,\pi]$.
\end{corollary}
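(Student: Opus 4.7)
The plan is to proceed directly from the Fourier expansion of $N_\mu v$ established in Lemma \ref{Nv}, compute $\tilde B_\mu v$ by termwise differentiation in $y$ at $y=0$, and then extract a uniform bound on $[0,\pi]$ by carefully controlling the hyperbolic ratios that appear. Since Lemma \ref{Nv} already guarantees $N_\mu v\in C^2(\overline\Om)$ for $v\in \Uscr_2$, the trace $\gamma_1 N_\mu v$ is automatically a well-defined continuous function on $[0,\pi]$; the actual task is to obtain the quantitative estimate $\lVert \tilde B_\mu v\rVert_{C[0,\pi]}\leq K_\mu\lVert v\rVert_{\Uscr_2}$.

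Differentiating each term of \rfb{Nvformula} with respect to $y$, evaluating at $y=0$, and using $\sin[(2k-1)\pi/2]=(-1)^{k-1}$, the resulting prefactor $(2k-1)\pi$ cancels the matching factor in the denominator of $a_k$, and I obtain
\begin{equation*}
(\tilde B_\mu v)(x) \m=\m -\sqrt{2\mu}\sum_{k\in\nline}(-1)^{k-1}\langle v,\psi_k\rangle\, \frac{\cosh\left[\frac{(2k-1)\pi(x-\pi)}{2\sqrt{\mu}}\right]}{\sinh\left[\frac{(2k-1)\pi^2}{2\sqrt{\mu}}\right]}.
\end{equation*}

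Next I estimate this series uniformly in $x$. Since $\cosh$ is even and increasing on $[0,\infty)$, and $x-\pi\in[-\pi,0]$ for $x\in[0,\pi]$, the numerator attains its maximum at $x=0$, giving
\begin{equation*}
\sup_{x\in[0,\pi]}\left|\frac{\cosh\left[\frac{(2k-1)\pi(x-\pi)}{2\sqrt{\mu}}\right]}{\sinh\left[\frac{(2k-1)\pi^2}{2\sqrt{\mu}}\right]}\right| \m=\m \coth\left[\frac{(2k-1)\pi^2}{2\sqrt{\mu}}\right] \m\leq\m \coth\left[\frac{\pi^2}{2\sqrt{\mu}}\right],
\end{equation*}
which is finite and uniform in $k\in\nline$. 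Combining this with the Cauchy--Schwarz bound
\begin{equation*}
\sum_{k\in\nline}|\langle v,\psi_k\rangle|\m\leq\m \left(\sum_{k\in\nline}(2k-1)^{-4}\right)^{1/2}\lVert v\rVert_{\Uscr_2}\m<\m\infty,
\end{equation*}
shows that the series defining $\tilde B_\mu v$ converges absolutely and uniformly on $[0,\pi]$, so that termwise differentiation is justified by the Weierstrass M-test, $\tilde B_\mu v\in C[0,\pi]$, and the desired operator bound follows.

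The proof therefore reduces to the computation above combined with the hyperbolic estimate; there is no real obstacle, only some mild bookkeeping to verify that the numerator's exponential growth is always absorbed by the denominator $\sinh[(2k-1)\pi^2/(2\sqrt{\mu})]$. The argument closely parallels the corresponding computation in \cite{Su2020stabilizability}, the only difference being the dimensionless scaling factor $\sqrt{\mu}$ inside the hyperbolic arguments, which does not affect the structure of the estimates.
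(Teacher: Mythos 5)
Your proof is correct and follows exactly the computation that the paper sets up (Lemma \ref{Nv} gives the explicit Fourier expansion of $N_\mu v$, and differentiating it termwise in $y$ at $y=0$ followed by the uniform hyperbolic bound $\coth[\pi^2/(2\sqrt\mu)]$ and a Cauchy--Schwarz step). The paper itself does not actually spell out a proof of this corollary --- it states it and afterwards defers to the dimensional version in \cite{Su2020stabilizability} --- so you are reconstructing precisely the argument that the author implicitly relies on; your only minor imprecision is attributing the justification of termwise differentiation to the Weierstrass M-test, whereas what is really used is uniform convergence of the differentiated series (which you do establish) together with the $C^2(\overline\Om)$ regularity from Lemma \ref{Nv}, which already makes the trace $\gamma_1 N_\mu v$ classically well-defined.
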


\begin{proposition}\label{NN}
The operator $\tilde{B}_\mu$ introduced in Corollary {\rm\ref{NN-pre}}
can be extended in a unique manner to a linear bounded operator 
$B_\mu\in\Lscr(\Uscr_0, H)$. In particular, the operator $B_\mu$ belongs to
$\Lscr(\cline, H)$ and for every $u\in\cline$ \vspace{-1mm}
\begin{equation*} 
  (B_\mu u)(x) \m=\m \sum_{k\in\nline} b_k\cosh\left[
  \frac{(2k-1)}{2\sqrt{\mu}}\pi(x-\pi)\right],
\end{equation*}
where \vspace{-0.5mm}
$$ b_k \m=\m \frac{-4u \sqrt{\mu}}{ (2k-1)\pi \sinh\left[\frac{2k-1}
	{2\sqrt{\mu}} \pi^2\right]}.$$
\end{proposition}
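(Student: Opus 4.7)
The plan is to prove Proposition \ref{NN} in three stages: compute $\tilde B_\mu v$ explicitly for $v\in\Uscr_2$, establish its boundedness in the $\Uscr_0\to H$ norm so that it extends by density, and then specialize to constants. For $v\in\Uscr_2$, Lemma \ref{Nv} ensures $N_\mu v\in C^2(\overline\Om)$, so the series given there can be differentiated term by term in $y$ and evaluated at $y=0$. Using $\frac{\prt}{\prt y}\cos\bigl[(2k-1)\frac{\pi}{2}(y+1)\bigr]\big|_{y=0} = -(2k-1)\frac{\pi}{2}(-1)^{k+1}$, a direct substitution into the formula for $a_k$ yields
\[ \tilde B_\mu v(x) = \sum_{k\in\nline} c_k(v)\cosh\bigl[\alpha_k(x-\pi)\bigr], \qquad \alpha_k := \tfrac{(2k-1)\pi}{2\sqrt\mu}, \]
with $c_k(v) = -\sqrt{2\mu}\,(-1)^{k+1}\langle v,\psi_k\rangle/\sinh(\alpha_k\pi)$. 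The explicit formula for constant $u\in\cline$ then drops out: a direct integration gives $\langle u,\psi_k\rangle = 2\sqrt 2\,u(-1)^{k+1}/[(2k-1)\pi]$, the two factors of $(-1)^{k+1}$ cancel in $c_k(u)$, and one recovers exactly the claimed $b_k$.

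The crux is the estimate $\|\tilde B_\mu v\|_H \leq C_\mu \|v\|_{\Uscr_0}$. A naive triangle inequality is too lossy: the quantities $\|\cosh[\alpha_k(x-\pi)]\|_{L^2}^2/\sinh^2(\alpha_k\pi)$ decay only like $1/(k\sqrt\mu)$, so a Cauchy--Schwarz against $\sum_k|\langle v,\psi_k\rangle|^2$ leaves behind a divergent harmonic factor. The key observation is that for large $k$ the normalized profiles $\cosh[\alpha_k(x-\pi)]/\sinh(\alpha_k\pi)$ are well approximated by $e^{-\alpha_k x}$, and the Gram matrix $\bigl(1-e^{-(\alpha_k+\alpha_l)\pi}\bigr)/(\alpha_k+\alpha_l)$ of the family $\{e^{-\alpha_k x}\}$ in $L^2[0,\pi]$ is comparable, for $k,l$ large, to the scaled Hilbert matrix $\sqrt\mu/[\pi(k+l-1)]$. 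The classical Hilbert inequality bounds its $\ell^2$-operator norm by a constant multiple of $\sqrt\mu$; combined with $|c_k(v)|^2$ being of order $\mu|\langle v,\psi_k\rangle|^2$, this yields $\|\tilde B_\mu v\|_H^2 \leq C\mu^{3/2}\|v\|_{\Uscr_0}^2$. Density of $\Uscr_2$ in $\Uscr_0$ then produces the unique bounded extension $B_\mu\in\Lscr(\Uscr_0,H)$, and its restriction to the constants gives $B_\mu\in\Lscr(\cline,H)$.

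The main obstacle is precisely this bilinear $L^2$-estimate; recognizing the Hilbert-matrix structure is what prevents a divergent bound. As a safety net, and as an independent check, I would also derive the duality identity
\[ \langle \tilde B_\mu v,\eta\rangle_H = -\mu\int_{-1}^0 v(y)\overline{(D_\mu\eta)(0,y)}\,\dd y, \qquad v\in\Uscr_2,\ \eta\in\Hscr_3, \]
by applying Green's formula for $\Delta_\mu$ to the two $\mu$-harmonic functions $N_\mu v$ and $D_\mu\eta$, using that only the left-boundary trace of $N_\mu v$ and the top-boundary trace of $D_\mu\eta$ contribute. This reduces the boundedness of $B_\mu$ to the trace estimate $\|(D_\mu\eta)(0,\cdot)\|_{L^2[-1,0]} \leq C\|\eta\|_H$, which can in turn be handled by the same Hilbert-type inequality applied this time to the Fourier expansion of Lemma \ref{Deta}.
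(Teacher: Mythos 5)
Your proof is correct. The key step -- observing that a term-by-term Cauchy--Schwarz estimate leaves behind a divergent $\sum (2k-1)^{-1}$, so one must instead bound the Gram matrix of the normalized profiles $\cosh[\alpha_k(x-\pi)]/\sinh(\alpha_k\pi)$, which is dominated entrywise by a $\sqrt{\mu}$-scaled Hilbert matrix -- is exactly what makes the $\Uscr_0\to H$ estimate close. (Entrywise domination is enough here: if $|G_{kl}|\le M_{kl}$ with $M$ having nonnegative entries, then $\lVert G\rVert_{\ell^2\to\ell^2}\le\lVert M\rVert_{\ell^2\to\ell^2}$, and the Hilbert inequality then gives $\lVert M\rVert\le C\sqrt{\mu}$, so $\lVert\tilde B_\mu v\rVert^2\le C\mu^{3/2}\lVert v\rVert_{\Uscr_0}^2$.) The Green's-identity duality formula you record is also correct, with the only nonzero boundary contributions being precisely the two you list.

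There is, however, nothing in this paper to compare against: immediately below Proposition \ref{NN} the text states that its proof, together with that of Proposition \ref{DN}, is ``completely similar with the corresponding ones \dots{} in \cite{Su2020stabilizability}'' and omits the details, so the paper itself contains no argument for this statement.

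One small point worth tightening in your write-up: a nonzero constant is \emph{not} an element of $\Uscr_2$ -- indeed $|\langle u,\psi_k\rangle|\asymp(2k-1)^{-1}$ places constants in $\Uscr_\beta$ only for $\beta<\tfrac{1}{2}$. So the explicit formula for $B_\mu u$ with $u\in\cline$ is not a ``specialization'' of the Stage-1 computation on $\Uscr_2$; it is obtained from the bounded extension $B_\mu\in\Lscr(\Uscr_0,H)$ together with the $H$-convergence of $\sum_k c_k(v)\cosh[\alpha_k(\cdot-\pi)]$ for $v\in\Uscr_0$. You do include the density step, so this is a matter of ordering and phrasing rather than a gap in the argument.
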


The proofs of Proposition \ref{DN} and Proposition \ref{NN} are completely
similar with the corresponding ones for the usual Dirichlet to Neumann and 
Neumann to Neumann maps (with dimension) in \cite{Su2020stabilizability}.
Therefore, we omit the details here. Next we introduce a convergence 
property on the Neumann to Neumann map $B_\mu$, which plays an important 
role in our arguments.

\begin{theorem}\label{Bmuconverg}
Let $B_0=-\delta_0$, where $\delta_0$ is the Dirac mass concentrated at $x=0$ 
and let $B_\mu$ be the Neumann to Neumann map defined in Proposition \ref{NN}. 
Then we have \vspace{-0.5mm}
\begin{equation}\label{maincov}
  \lim_{\mu\to0} \left\lVert\m \frac{1}{\mu} B_\mu u-B_0 u\m\right 
  \rVert_{\left(W^{1,2}[0,\pi]\right)'}= 0 \FORALL u\in\cline, \vspace{-0.5mm}
\end{equation}
where $\left(W^{1,2}[0,\pi]\right)'$ is the dual of $W^{1,2}[0,\pi]$ with 
respect to the pivot space $H$.
\end{theorem}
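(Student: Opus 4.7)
The plan is to test \rfb{maincov} against an arbitrary $g\in W^{1,2}[0,\pi]$. Since $B_\mu u\in H$ by Proposition \ref{NN} and $H$ serves as the pivot for $\left(W^{1,2}[0,\pi]\right)'$, while $-u\m\delta_0\in \left(W^{1,2}[0,\pi]\right)'$ is legitimate by the Sobolev embedding $W^{1,2}[0,\pi]\hookrightarrow C[0,\pi]$, the claimed convergence is equivalent to
$$ \lim_{\mu\to 0}\ \sup_{\|g\|_{W^{1,2}}\leq 1}\left|\int_0^\pi \frac{1}{\mu}(B_\mu u)(x)\overline{g(x)}\,\dd x+u\m\overline{g(0)}\right|=0. $$
I would insert the series of Proposition \ref{NN} into the integral and analyze each mode by a single integration by parts.

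To this end, set $\alpha_k=\frac{(2k-1)\pi}{2\sqrt{\mu}}$ and introduce the antiderivative
$$ \Phi_k(x)=\frac{\sinh[\alpha_k(x-\pi)]}{\alpha_k\sinh[\alpha_k\pi]},\qquad \Phi_k(\pi)=0,\quad \Phi_k(0)=-\frac{1}{\alpha_k}, $$
of the $k$-th mode $\cosh[\alpha_k(x-\pi)]/\sinh[\alpha_k\pi]$. Then
$$ \frac{1}{\mu}(B_\mu u)(x)=\sum_{k\in\nline}\frac{-4u}{(2k-1)\pi\sqrt{\mu}}\Phi_k'(x), $$
and integration by parts produces, for each $k$, a boundary contribution $\overline{g(0)}/\alpha_k$ together with a remainder $-\int_0^\pi \Phi_k \overline{g'}\,\dd x$. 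Using $\frac{1}{\sqrt{\mu}\alpha_k}=\frac{2}{(2k-1)\pi}$, the $k$-th boundary term equals $\frac{-8u\overline{g(0)}}{(2k-1)^2\pi^2}$, and the Euler identity $\sum_{k\in\nline}(2k-1)^{-2}=\pi^2/8$ makes the total boundary contribution equal to $-u\overline{g(0)}$, which cancels the $+u\overline{g(0)}$ in the supremum exactly.

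What remains is to show that the total remainder tends to $0$ uniformly on $\{\|g\|_{W^{1,2}}\leq 1\}$. Since $\coth(\alpha_k\pi)\geq 1$, one has $\sinh[\alpha_k(\pi-x)]/\sinh[\alpha_k\pi]\leq e^{-\alpha_k x}$ on $[0,\pi]$, hence $|\Phi_k(x)|\leq e^{-\alpha_k x}/\alpha_k$ and $\|\Phi_k\|_{L^2[0,\pi]}\leq (2\alpha_k^3)^{-1/2}=O\left(\mu^{3/4}(2k-1)^{-3/2}\right)$. By Cauchy-Schwarz the $k$-th remainder is then bounded by $C|u|\m\mu^{1/4}(2k-1)^{-5/2}\|g'\|_{L^2}$, and $\sum_{k\in\nline}(2k-1)^{-5/2}<\infty$ yields a uniform $O(\mu^{1/4}\|g\|_{W^{1,2}})$ bound, completing the proof of \rfb{maincov}. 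The main obstacle is making the termwise manipulation rigorous: I would first work with partial sums of \rfb{Nvformula}-type expressions (finite sums of smooth functions for which integration by parts and all exchanges of sum and integral are elementary) and then pass to the limit using the $L^2$-convergence of the series in Proposition \ref{NN} together with the absolute summability estimate just derived.
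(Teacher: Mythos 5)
Your proposal is correct and follows essentially the same route as the paper: write $\tfrac{1}{\mu}B_\mu u$ as the Fourier series from Proposition \ref{NN}, integrate by parts once mode-by-mode so that the single boundary term sums exactly, via $\sum_{k\in\nline}(2k-1)^{-2}=\pi^2/8$, to $-u\,\overline{g(0)}$ (cancelling $B_0u=-u\delta_0$), and then estimate the remaining integral in $L^2$ mode-by-mode to obtain an $O(\mu^{1/4})$ rate uniformly over the unit ball of $W^{1,2}[0,\pi]$. The only difference is cosmetic and lies in the remainder estimate: the paper splits $\sinh[\alpha_k(x-\pi)]$ into two exponential pieces (one weighted by $e^{-\alpha_k\pi}$, one by $e^{-\alpha_k x}$) and bounds each, whereas you bound $|\Phi_k(x)|\leq e^{-\alpha_k x}/\alpha_k$ directly and apply Cauchy--Schwarz, which is a bit more economical and reaches the same $\mu^{1/4}(2k-1)^{-5/2}$ bound per mode.
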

\begin{proof}
One readily sees that, equivalently, we need to show that for every 
$u\in\cline$ and for every $\phi\in W^{1,2}[0,\pi]$ with $\lVert \phi\rVert
_{W^{1,2}}\leq1$,\vspace{-1mm}
\begin{equation}\label{main}
  \lim_{\mu\to0}\sup_{\lVert \phi\rVert_{W^{1,2}}\leq1}\left|\left\langle 
  \frac{1}{\mu} B_\mu u\m-\m B_0 u,
  \ \phi \right\rangle\right| = 0.
\end{equation} 
According to Proposition \ref{NN}, we have \vspace{-0.5mm}
\begin{equation}\label{Bmu}
  \frac{1}{\mu} B_\mu u\m=\m \sum_{k\in\nline} c_k \cosh\left[
  \frac{(2k-1)}{2\sqrt{\mu}}\pi(x-\pi)\right],
\end{equation}
where \vspace{-1mm}
$$ c_k \m=\m \frac{-4u}{\sqrt{\mu} (2k-1)\pi \sinh\left[\frac{2k-1}
  {2\sqrt{\mu}} \pi^2\right]}. $$
We denote 
$$ f_k(x)\m=\m\sinh\left[\frac{2k-1}{2\sqrt{\mu}}\pi x\right], \qquad 
   $$
and obtain by using integration by parts that 
\begin{multline}\label{lef}
  \int_0^\pi\cosh\left[\frac{(2k-1)}{2\sqrt{\mu}}\pi(x-\pi)\right]
  \phi(x)\dd x\m=\\ \m\frac{2\sqrt{\mu}}{(2k-1)\pi}\left\{\phi(0)f_k(\pi)
  -\m\int_0^\pi\sinh\left[\frac{2k-1}{2\sqrt{\mu}}\pi(x-\pi)\right]
  \phi'(x)\dd x\right\}.
\end{multline}
Furthermore, note that
\begin{multline*}
  \int_0^\pi\sinh\left[\frac{2k-1}{2\sqrt{\mu}}\pi(x-\pi)\right]
  \phi'(x)\m\dd x=\\{\rm e}^{-\frac{2k-1}{2\sqrt{\mu}}\pi^2}\int_0^\pi 
  f_k(x)\phi'(x)\m \dd x\m-\m f_k(\pi)\int_0^\pi {\rm e}^{-\frac{2k-1}
  {2\sqrt{\mu}}\pi x}\phi'(x)\m\dd x,
\end{multline*}
we thus have the following estimate (using \rfb{lef}) 
\begin{multline}\label{I-II}
 \left|\left\langle \frac{1}{\mu} B_\mu u- B_0u,\m \phi\right\rangle\right|
 \m\leq\m \sum_{k\in\nline} \frac{32\mu |u|}{(2k-1)^4\pi^6}
 \m{\rm e}^{-\frac{2k-1}{2\sqrt{\mu}}\pi^2}\int_0^\pi \left|f_k(x)\phi'(x)
 \right|\dd x\\ 
 +\m\sum_{k\in\nline} \frac{8|u|}{(2k-1)^2\pi^2}\int_0^\pi 
 \left|{\rm e}^{-\frac{2k-1}{2\sqrt{\mu}}\pi x}\phi'(x)\right|\dd x.
\end{multline}
In the above estimate, we used the fact $\sum_{k\in\nline}\frac{1}{(2k-1)^2}=
\frac{\pi^2}{8}$ and $\sinh x\geq x^2$ for large $x$. Note that there exists 
a constant $C>0$, such that ${\rm e}^{-\frac{2k-1}{2\sqrt{\mu}}\pi^2}f_k(\pi)
\leq C$ uniformly with respect to $\mu$ and $k$, we immediately obtain that
$$\sum_{k\in\nline} \frac{\mu}{(2k-1)^4}\m{\rm e}^{-\frac{2k-1}{2\sqrt{\mu}}\pi^2}
  \int_0^\pi \left|f_k(x)\phi'(x)\right|\dd x\leq C\sum_{k\in\nline} 
  \frac{\mu}{(2k-1)^4}\m \lVert\phi'\rVert \leq C \mu.$$ 
Moreover, since $\left\lVert {\rm e}^{-\frac{2k-1}{2\sqrt{\mu}}\pi x}
\right\rVert^2\leq C\frac{2\sqrt{\mu}}{(2k-1)\pi}$ we have
$$ \sum_{k\in\nline} \frac{1}{(2k-1)^2}\int_0^\pi 
   \left|{\rm e}^{-\frac{2k-1}{2\sqrt{\mu}}\pi x}\phi'(x)\right|\dd x
  \leq C \sum_{k\in\nline} \frac{\mu^{\frac{1}{4}}}{(2k-1)^{\frac{5}{2}}}\m
  \lVert \phi'\rVert \leq C \mu^{\frac{1}{4}}.$$
Therefore, we conclude that, for every fixed $u\in\cline$, the right-hand 
side of \rfb{I-II} can be controlled by $C\mu^{\frac{1}{4}}$, which 
clearly implies \rfb{main}.
\end{proof}

\section{Operator form of the governing equations}\label{3}
In this section, we formulate the governing equations \rfb{gravity-1} as a 
well-posed LTI (linear time-invariant) system in an appropriate Hilbert space. 
To this aim, we first define a scale of Hilbert spaces associated to a certain 
operator and then derive the dimensionless control system related to \rfb{gravity-1}, 
finally formulate the control system into the one that allows us to apply the 
Trotter-Kato approximation theorem in Section \ref{5}.

For a self-adjoint positive operator $A:\Dscr(A)\to H$ with compact 
resolvents, according to the classical results (see, for instance, 
\cite[Chapter 3]{Obs_Book}), the operator $A$ is diagonalizable, also 
called Riesz-spectral operator in some literatures (for instance in 
\cite{CZ_THE_BOOK}), with an orthonormal basis $(\varphi_k)_{k\geq0}$ 
of eigenvectors and the corresponding positive eigenvalues 
$(\lambda_k)_{k\geq0}$. For any $z\in H$, we denote 
$z_k=\langle z,\varphi_k\rangle$. Moreover, we have \vspace{-1mm}
$$ \Dscr(A) \m=\m \left\{z\in H\ \left|\ \ \sum_{k\geq 0}
   (1+\left|\lambda_k\right|^2)\m |z_k|^2 < \infty\right\}\right.,
   \vspace{-1mm} $$
and $$ Az\m=\m\sum_{k\geq0}\lambda_k z_k \varphi_k\qquad (z\in\Dscr(A)).$$
For every $\alpha\in\rline$, we introduce the scale of Hilbert spaces $H_\alpha$, 
associated to the operator $A$, which is defined by ($H_0=H$) \vspace{-1mm}
$$ H_\alpha \m=\m \left\{z\in H\ \left|\ \ \sum_{k\geq 0} 
   (1+\left|\lambda_k\right|^{2\alpha})\m |z_k|^2 < \infty\right\}\right.,
   \vspace{-1mm} $$
endowed with the inner product
$$ \langle\eta,\upsilon\rangle_\alpha=\sum_{k\geq 0}(1+\left|\lambda_k
  \right|^{2\alpha})\eta_k\m\upsilon_k \FORALL \eta,\m \upsilon 
  \in H_\alpha.\vspace{-1mm}$$
It is obvious to see that, for every $\alpha\geq0$, Hilbert space $H_\alpha$ 
is actually the domain of the operator $A^\alpha$ with its graph norm $\lVert 
\cdot\rVert_{gr}$. Furthermore, for every $\alpha\in\rline$, $H_{-\alpha}$ is
the dual space of $H_\alpha$ with respect to the pivot space $H$. We will 
apply, in the following part, the above definition of a scale of Hilbert 
spaces to different operators.


Next we formulate the equations \rfb{gravity-1} into a second-order evolution 
equation in terms of $\zeta_\mu$. Recalling the definition of the 
Dirichlet to Neumann map $A_\mu$ and the Neumann to Neumann map $B_\mu$ 
in Section \ref{op2}, we immediately obtain from the structure of the governing equations 
\rfb{gravity-1} that \vspace{-1mm}
\begin{equation}\label{relation}
\frac{\prt \phi_\mu}{\prt  y}( t, x,0)=A_\mu\psi_\mu( t, x)+B_\mu v( t),\vspace{-1mm}
\end{equation}
where $t\geq0$, $x\in(0,\pi)$ and  $\psi_\mu( t, x)= \phi_\mu(t,x,0)$. Taking the 
derivative of the second equation in \rfb{gravity-1} with respect to time and 
eliminating $\psi_\mu(t,x)$ by using the third equation of \rfb{gravity-1}, we get 
the second-order control system associated to \rfb{gravity-1}, i.e. for all $t\geq0$, 
$x\in(0,\pi)$,
\begin{equation}\label{Amu}
  \left\{ \begin{aligned} 
  &\frac{\prt^2\zeta_\mu}{\prt t^2}(t,x)\m +\m \frac{1}{\mu}A_\mu 
  \zeta_\mu(t,x)=\frac{1}{\mu}B_\mu u(t),\\
  &\zeta_\mu(0,x)=\zeta_0(x),\quad \frac{\prt \zeta_\mu}{\prt t}(0,x)
  =\zeta_1(x), \end{aligned}\right. 
\end{equation}
where $u=\frac{dv}{dt}$ is the input signal, the operators $A_\mu$ and $B_\mu$ 
are defined in Proposition \ref{DN} and Proposition \ref{NN}, respectively. 
Moreover, we introduce the operator $A_0:\Dscr(A_0)\to H$ as follows: \vspace{-1mm}
\begin{equation}\label{def-A0}
A_0=-\frac{d^2}{dx^2}\qquad \Dscr(A_0)=\left\{f\in W^{2,2}[0,\pi]\m 
\left|\m  \frac{df}{dx}(0)=\frac{df}{dx}(\pi)=0\right.\right\}. 
\end{equation}
With the operators $B_0$ defined in Theorem \ref{Bmuconverg}, we consider the 
following evolution equation, i.e. for all $t\geq0$, $x\in(0,\pi)$, \vspace{-1mm}
\begin{equation}\label{A0}
  \left\{ \begin{aligned} 
  &\frac{\prt^2\zeta}{\prt t^2}(t,x)\m +\m A_0 \zeta(t,x)=B_0 u(t),\\
  &\zeta(0,x)=\zeta_0(x), \quad \frac{\prt \zeta}{\prt t}(0,x)=\zeta_1(x). 
  \end{aligned}\right.\vspace{-1mm}
\end{equation}

It is known that the operator $A_0$ defined in \rfb{def-A0} is diagonalizable 
with the eigenvalues $k^2$ and the corresponding eigenvectors $\varphi_k$ are
given in \rfb{mare_definitie}. For the operator $A_0$, we denote by 
$\mathbb{H}_\alpha$ with $\alpha\in\rline$ the scale of Hilbert spaces which has 
been introduced at the beginning of this section. Notice that the Dirichlet to 
Neumann operator $A_\mu$ in Proposition \ref{DN} is also diagonalizable, so that, 
for $\alpha\in\rline$, we denote by $H_{\mu,\alpha}$ the scale of Hilbert spaces 
associated to the operator $\frac{1}{\mu}A_\mu$. Therefore, we have $\mathbb{H}_0
=\mathbb{H}=H=H_{\mu,0}$ and $\mathbb{H}_{-\alpha}$ (or $H_{\mu,-\alpha}$) is the 
dual space of $\mathbb{H}_\alpha$ (or $H_{\mu,\alpha}$) with respect to the pivot 
space $H$. It is not difficult to see that actually we have $\mathbb{H}_{\frac{1}{2}}
=W^{1,2}[0,\pi]$. For more details on a scale of Hilbert space, please refer to 
\cite[Chapter 2]{Obs_Book}. 

We mention that the operator $B_\mu$ is bounded (see Proposition \ref{NN}), i.e. 
$B_\mu\in \Lscr(\mathbb{C},H)$, and $B_0$ induces an admissible control operator 
in the first-order system associated to \rfb{A0}  with the state $\sbm{\zeta \\ 
\frac{\prt\zeta}{\prt t}}$ (please refer to \cite[Proposition 6.2.5]{Obs_Book}), 
although it is unbounded (not contained in the state space), i.e. $B_0\in\Lscr
(\mathbb{C},\mathbb{H}_{-\frac{1}{2}})$. Based on the above analysis, the system 
\rfb{A0} is well-defined.

\begin{remark}
{\rm According to Proposition \ref{DN}, the eigenvalues of $\frac{1}{\mu}A_\mu$ 
are $\frac{k\tanh (\sqrt{\mu}k)}{\sqrt{\mu}}$, which is equivalent to $k$ for fixed 
$\mu\in(0,1)$. Therefore, for every $\alpha\geq0$, Hilbert space $H_{\mu,\alpha}$ 
is actually equivalent to $\Hscr_\alpha$ introduced in \rfb{HALPHA}. Moreover, 
according to interpolation theory (see, for instance, \cite{LiMa}, \cite[Part II]{BDDM} 
and \cite{chandler2015interpolation}), for $\alpha\in(0,1)$, the scale of Hilbert space 
$\Hscr_\alpha$ is exactly the classical Sobolev space $W^{\alpha,2}[0,\pi]$.}
\end{remark}

\begin{remark}
{\rm The initial boundary value problem \rfb{limit-wave} is a well-posed 
boundary control system (for this concept, see for instance \cite[Chapter 10]
{Obs_Book}), which is equivalent to \rfb{A0} in weak sense, that is, for every
$u\in\Leloc$, for every $\zeta_0\in\mathbb{H}_{\frac{1}{2}} $ and 
$\zeta_1\in\mathbb{H}$, there exists a unique function \vspace{-1mm}
$$ \zeta\in C([0,\infty); \mathbb{H}_{\frac{1}{2}})\cap C^1([0,\infty);
   \mathbb{H}), \vspace{-0.5mm}$$
such that $\zeta(0,x)=\zeta_0$ and it satisfies, for every $t\geq0$ and 
every $\psi\in \mathbb{H}_{\frac{1}{2}}$, \vspace{-1mm}
\begin{multline*}
  \int_0^\pi \frac{\prt\zeta}{\prt t}(t,x)\overline{\psi(x)}\m\dd x -\int_0^\pi
  \zeta_1(x) \overline{\psi(x)}\m\dd x\\
  \m=\m -\int_0^t\int_0^\pi \frac{\prt\zeta}{\prt x}(\sigma,x)\overline{
  \frac{d\psi}{d x}(x)} \m\dd x\,\dd \sigma - \int_0^t u(\sigma) 
  \overline{\psi(0)} \m\dd \sigma.
\end{multline*}  }
\end{remark}

In what follows, we are ready to study the asymptotic behaviour of 
the system \rfb{Amu} when $\mu$ goes to zero. We shall consider the 
relationship between the solutions of \rfb{Amu} and \rfb{A0}. Normally, 
the state of the control system is taken as $\sbm{\zeta \\ 
\frac{\prt\zeta}{\prt t}}$, but the main problem lies in the difference 
of the energy space of \rfb{Amu} and \rfb{A0}, one is $H_{\mu,\frac{1}{2}}\times H$ 
\vspace{+0.5mm} and the other is $\mathbb{H}_{\frac{1}{2}}\times\mathbb{H}$. 
It means that we cannot apply the Trotter-Kato theorem directly. According 
to the classical semigroup theory (see, for instance, \cite{Obs_Book} and 
\cite{CZ_THE_BOOK}), for $\zeta_0\in\mathbb{H}_{\frac{1}{2}}$ and 
$\zeta_1\in\mathbb{H}$, \rfb{Amu} and \rfb{A0} admit a unique solution 
$\zeta_\mu$ and $\zeta$, respectively, which satisfy 
$$ \zeta_\mu\in C([0,\infty);H_{\mu,\frac{1}{2}})\cap C^1([0,\infty);H),$$
and \vspace{-1mm}
$$ \zeta\in C([0,\infty); \mathbb{H}_{\frac{1}{2}})\cap C^1([0,\infty);
   \mathbb{H}).  $$ 
We thus consider the following change of variables,
\begin{equation}\label{alphamu}
  \alpha_\mu:=\frac{\prt\zeta_\mu}{\prt t},\qquad 
  \beta_\mu:=\left(\frac{1}{\mu}A_\mu\right)^{1/2}\zeta_\mu,
  \vspace{-1mm}
\end{equation}
and 
\begin{equation}\label{alpha}
\alpha:=\frac{\prt\zeta}{\prt t},\qquad \beta:=A_0^{1/2}\zeta,
\end{equation} 
where $A_0$ and $A_\mu$ are introduced in \rfb{def-A0} and Proposition 
\ref{DN}, respectively. In this way, we have $\alpha_\mu,\m \beta_\mu\in 
C([0,\infty);H)$ and $\alpha,\m \beta\in C([0,\infty);\mathbb{H})$. 
Setting 
$$ w_\mu(t)= \bbm{\alpha_\mu(t,\cdot)\\ \beta_\mu(t,\cdot)} \quad 
   \text{and} \quad w(t)= \bbm{\alpha(t,\cdot)\\ \beta(t,\cdot)}, $$
we obtain from \rfb{Amu} and \rfb{A0} that
\begin{equation}\label{newmu-1st}
  \left\{ \begin{aligned}
  &\frac{d w_\mu}{d t}(t)=\mathscr{A}_\mu w_\mu(t)+
  \mathscr{B}_\mu u(t),\\
  & w_\mu(0)=w_{\mu,0},\end{aligned} \right.
\end{equation}
and 
\begin{equation}\label{new-1st}
\left\{ \begin{aligned}
&\frac{d w}{d t}(t)=\mathscr{A}_0 w(t)+\mathscr{B}_0 u(t),\\
& w(0)=w_0, \end{aligned} \right.
\end{equation}
where 
\begin{equation}\label{curA}
\mathscr{A}_\mu\m=\m\bbm{ 0 & -\left(\frac{1}{\mu}A_\mu\right)
	^{1/2} \\ \left(\frac{1}{\mu}A_\mu\right)^{1/2} 
	& 0},\qquad \mathscr{A}_0\m=\m\bbm{ 0 & -A_0^{1/2} \\ 
	A_0^{1/2} & 0}, 
\end{equation}
\begin{equation}\label{curB}
\mathscr{B}_\mu=\bbm{\frac{1}{\mu}B_\mu \\ 0}, \qquad 
\mathscr{B}_0=\bbm{B_0 \\ 0},
\end{equation}
and 
\begin{equation}\label{initial}
w_{\mu,0}=\bbm{\zeta_1\\ \left(\frac{1}{\mu}A_\mu\right)
	^{1/2}\zeta_0},\qquad w_0=\bbm{\zeta_1\\
	A_0^{1/2}\zeta_0}.
\end{equation}

Let $X=H\times H$, then the operator $\mathscr{A}_\mu: 
\Dscr(\mathscr{A}_\mu)\to X$ with $\Dscr(\mathscr{A}_\mu)=H_{\mu,\frac{1}{2}}
\times H_{\mu,\frac{1}{2}}$ and $\mathscr{A}_0: \Dscr(\mathscr{A}_0)\to X$ 
with $\Dscr(\mathscr{A}_0)=\mathbb{H}_{\frac{1}{2}}\times\mathbb{H}_
{\frac{1}{2}}$. Furthermore, it is not difficult to see that 
$\mathscr{B}_\mu\in\Lscr(\cline,X)$ and $\mathscr{B}_0\in \Lscr(\cline, 
\mathbb{H}_{-\frac{1}{2}}\times\mathbb{H}_{-\frac{1}{2}})$. With the help 
of the new variables defined in \rfb{alphamu} and \rfb{alpha}, the control 
systems we are now focusing on are \rfb{newmu-1st} and \rfb{new-1st}, 
which possess the same state space $X$ and provide the possibility to 
apply the Trotter-Kato theorem.

Based on the structure of the operators $\mathscr{A}_\mu$ and 
$\mathscr{A}_0$, we introduce the following lemma, which is probably known 
in the semigroup community. However, for the sake of completeness (and with 
no claim of originality) we give here its precise statement and a short proof. 
For simplicity, we denote by $R(\lambda:A)=(\lambda I-A)^{-1}$ the resolvent 
of $A$ with $\lambda\in \rho(A)$ (resolvent set of $A$).

\begin{lemma}\label{skew-adjoint}
Let the operator $A:\Dscr(A)\to H$ be positive (i.e. $A\geq 0$) with compact 
resolvents. Then the operator $\mathscr{A}:\Dscr(\mathscr{A})\to X$ defined by 
$$ \Dscr(\mathscr{A})=\Dscr\left(A^{1/2}\right)\times\Dscr\left(A^{1/2}\right),$$
$$  \mathscr{A}\bbm{\varphi \\ \psi}\m=\m\bbm{-A^{1/2}\psi \\ A^{1/2}\varphi }, 
   \FORALL \bbm{\varphi \\ \psi}\in \Dscr(\mathscr{A}), $$
generates a unitary group on $X$.
\begin{proof}
The operator $\mathscr{A}$ is obviously skew-symmetric since 
$\Re\langle \mathscr{A}w, w\rangle_X=0$ for all $w=\sbm{w_1 \\ w_2}\in 
\Dscr(\mathscr{A})$. Note that, for every $\sbm{f \\ g}\in X$, there exists 
$\varphi$ and $\psi$ defined by
\begin{equation*}
\varphi=-R(-1:A)(f+A^{1/2}g),\qquad \psi=R(-1:A)(-g+A^{1/2}f),
\end{equation*}
satisfy $\varphi,\m\psi\in\Dscr(A^{1/2})$ and
\begin{equation}\label{reso-l}
  \left(I+\mathscr{A}\right)\bbm{\varphi \\ \psi}=\bbm{f \\ g}.
\end{equation}
Indeed, note that since $A$ is positive, then $\sigma(A) \subset[0,\infty)$, 
which implies $-1\in\rho(A)$, so that the operator $I+A$ is invertible. Next we 
show that $\varphi,\m\psi\in\Dscr(A^{1/2})$. The positive operator $A^{1/2}: 
H_{\frac{1}{2}}\to H$ has a unique extension (still denoted by $A^{1/2}$) 
such that $A^{1/2}\in\Lscr(H, H_{-\frac{1}{2}})$, where the Hilbert spaces 
$H_s$ $(s\in\rline)$ is the scale of Hilbert space associated to the operator 
$A$. Moreover, $A: H_1\to H$ also has a unique extension such that 
$A\in\Lscr(H_{\frac{1}{2}},H_{-\frac{1}{2}})$, which implies that 
$R(-1,A)\in \Lscr(H_{-\frac{1}{2}}, H_{\frac{1}{2}})$. Thus, for every $g \in H$, 
$R(-1:A)A^{1/2}g\in H_{\frac{1}{2}}$. Since $A$ is positive with compact 
resolvents we obtain that $A$ is diagonalizable. According to the properties 
of diagonablizable operator (see, for instance, \cite[Section 3.6]{Obs_Book}), 
it is straight to verify that $R(-1,A)$ commutes with $A^{1/2}$, i.e. 
$R(-1,A)A^{1/2}f=A^{1/2}R(-1,A)f$, for every $f\in H$. Therefore, $\varphi$ 
and $\psi$ defined in the above formally satisfy \rfb{reso-l}. It follows 
that $I+\mathscr{A}$ is onto. 
		
Similarly, we get $I-\mathscr{A}$ is also onto. Then $\mathscr{A}$ 
is skew-adjoint on $X$ (see, for instance, \cite[Proposition 3.7.3]{Obs_Book}), 
so that, according to Stone's theorem, $\mathscr{A}$ generates a group of unitary 
operators on $X$. 
\end{proof}
\end{lemma}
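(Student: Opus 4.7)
The plan is to apply Stone's theorem, so the goal reduces to showing that $\mathscr{A}$ is skew-adjoint on $X = H \times H$. I would follow the classical recipe: prove that $\mathscr{A}$ is densely defined and skew-symmetric, then establish that both $I + \mathscr{A}$ and $I - \mathscr{A}$ are surjective. By the standard criterion (see for instance \cite[Proposition~3.7.3]{Obs_Book}), these two facts together are equivalent to $\mathscr{A}^\ast = -\mathscr{A}$, after which Stone's theorem yields a unitary group.

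First I would dispatch the easy algebraic points. Density of $\Dscr(\mathscr{A}) = \Dscr(A^{1/2}) \times \Dscr(A^{1/2})$ is immediate since $A^{1/2}$ is densely defined and self-adjoint (both properties coming from $A\geq 0$ with compact resolvents, hence diagonalizable). Skew-symmetry reduces to a one-line computation: for $w = \sbm{\varphi \\ \psi} \in \Dscr(\mathscr{A})$, self-adjointness of $A^{1/2}$ on $H$ gives
\[
\langle \mathscr{A} w, w \rangle_X = -\langle A^{1/2}\psi, \varphi\rangle_H + \langle A^{1/2}\varphi, \psi\rangle_H,
\]
whose real part vanishes.

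The main work is surjectivity of $I + \mathscr{A}$ (the argument for $I - \mathscr{A}$ is symmetric). Given $\sbm{f \\ g} \in X$, I would solve the system $\varphi - A^{1/2}\psi = f$, $A^{1/2}\varphi + \psi = g$ formally, obtaining $\psi = (I+A)^{-1}(g - A^{1/2}f)$ and $\varphi = (I+A)^{-1}(f + A^{1/2}g)$. Since $A^{1/2}f$ need not lie in $H$ when only $f \in H$, the step that requires care is making sense of these expressions. My plan is to use the Hilbert-space scale $H_s$ associated with $A$: $A^{1/2}$ extends to a bounded map $H \to H_{-1/2}$, while $(I+A)^{-1} \in \Lscr(H, H_1)$ extends to an element of $\Lscr(H_{-1/2}, H_{1/2})$. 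Composing, both $\psi$ and $\varphi$ land in $H_{1/2} = \Dscr(A^{1/2})$, as needed.

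The main obstacle I anticipate is precisely the rigorous handling of $A^{1/2}$ outside its natural domain and, in particular, justifying that $(I+A)^{-1}$ commutes with $A^{1/2}$ on the extended spaces so that the formal candidates for $\varphi$ and $\psi$ actually solve the resolvent equation. Here the hypothesis that $A$ has compact resolvents saves the day: $A$ is diagonalizable in an orthonormal eigenbasis $(\varphi_k)$, so all operators in sight act by multiplication on Fourier coefficients, and the required commutation and mapping properties reduce to trivial identities between scalars $\lambda_k^{1/2}$ and $(1+\lambda_k)^{-1}$. Once surjectivity of $I \pm \mathscr{A}$ is secured, skew-adjointness follows, and Stone's theorem provides the desired unitary group on $X$.
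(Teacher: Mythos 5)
Your proposal is correct and follows essentially the same route as the paper: skew-symmetry by direct computation, surjectivity of $I\pm\mathscr{A}$ via the explicit candidates $\varphi=(I+A)^{-1}(f+A^{1/2}g)$, $\psi=(I+A)^{-1}(g-A^{1/2}f)$ made rigorous through the Hilbert scale $H_s$ and the commutation of $(I+A)^{-1}$ with $A^{1/2}$ (justified by diagonalizability), then \cite[Proposition~3.7.3]{Obs_Book} and Stone's theorem. The paper writes the solution with the notation $R(-1:A)=-(I+A)^{-1}$, but modulo that sign convention your formulas and reasoning coincide with its proof.
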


\begin{remark}\label{embed}
{\rm The scale of Hilbert spaces $H_s$ ($s\in\mathbb{R}$) associated to 
the positive operator $A$, where $H_{-\alpha}$ is the dual of $H_\alpha$ 
($\alpha\geq0$) with the pivot space $H$, have the dense and 
continuous embeddings (see \cite[Section 3.4]{Obs_Book})\vspace{-1mm}
$$ H_1\subset H_{\frac{1}{2}}\subset H\subset H_{-\frac{1}{2}}
\subset H_{-1}. \vspace{-1mm}$$
The operators $A^{1/2}$ and $A$ have unique extensions such that 
$\tilde{A}^{1/2}\in\Lscr(H,H_{-\frac{1}{2}})$ and $\tilde{A}\in
\Lscr(H,H_{-1})$. Moreover, according to \cite[Section 2.10]{Obs_Book}, 
for every $\lambda\in\rho(A)$, the resolvent also have the corresponding 
extensions $R(\lambda:\tilde{A}) \in \Lscr(H_{-1}, H)$ and 
$R(\lambda:\tilde{A}^{1/2}) \in\Lscr(H_{-\frac{1}{2}}, H)$, 
which are unitary.  }
\end{remark}

\section{Operator convergence} \label{4}
This section contains the main ingredients of the convergence results, based on 
appropriate decomposition of the Fourier series describing the operators 
$\mathscr{A}_\mu$, $\mathscr{A}_0$ introduced in \rfb{curA} and the control operators 
$\mathscr{B}_\mu$, $\mathscr{B}_0$ in \rfb{curB}, which play an important 
role in the proof of Theorem \ref{result}.

We use the notation $A\in G(M,\omega)$ in what follows for an operator $A$, 
which is the generator of a $C_0$-semigroup $\tline(t)$ satisfying 
$\lVert\tline(t)\rVert\leq M{\rm e}^{\omega t}$ for every $t\geq0$. 
With this notation, Lemma \ref{skew-adjoint} implies that $\mathscr{A}_\mu,\m 
\mathscr{A}_0\m\in G(1,0)$ since $\frac{1}{\mu}A_\mu$ and $A_0$ are positive. 

\begin{lemma}\label{u=0}
With the operators $\mathscr{A}_\mu$ and $\mathscr{A}_0$ defined 
in \rfb{curA}, for every $\sbm{f \\ g}\in X$ we have\vspace{-0.5mm}
\begin{equation}\label{conv-resol}
  \lim_{\mu\to0}R(1:\mathscr{A}_\mu)\bbm{f \\ g}=
  R(1:\mathscr{A}_0)\bbm{f \\ g}\qquad 
  {\rm in} \quad X.
\end{equation}
\end{lemma}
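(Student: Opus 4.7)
The plan is to exploit the fact that $\frac{1}{\mu}A_\mu$ and $A_0$ are both diagonal in the orthonormal basis $(\varphi_k)_{k\geq 0}$ of $H$ introduced in \rfb{mare_definitie}. This reduces the computation of each resolvent to an explicit, mode-by-mode $2\times 2$ inversion, after which the convergence will follow from a straightforward dominated convergence argument.

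First, I would compute $\sbm{\varphi \\ \psi}=R(1:\mathscr{A}_\mu)\sbm{f \\ g}$ directly from \rfb{curA}: the equation $(I-\mathscr{A}_\mu)\sbm{\varphi \\ \psi}=\sbm{f \\ g}$ gives $\varphi+\bigl(\frac{1}{\mu}A_\mu\bigr)^{1/2}\psi=f$ and $-\bigl(\frac{1}{\mu}A_\mu\bigr)^{1/2}\varphi+\psi=g$. Writing $f=\sum_k f_k\varphi_k$, $g=\sum_k g_k\varphi_k$ and using Proposition \ref{DN}, the eigenvalues of $\frac{1}{\mu}A_\mu$ are
\[
  \lambda_k^\mu\m=\m\frac{k\tanh(\sqrt{\mu}k)}{\sqrt{\mu}},
\]
and a one-line calculation yields the Fourier coefficients
\[
  \varphi_k^\mu\m=\m\frac{f_k-\sqrt{\lambda_k^\mu}\m g_k}{1+\lambda_k^\mu},
  \qquad
  \psi_k^\mu\m=\m\frac{g_k+\sqrt{\lambda_k^\mu}\m f_k}{1+\lambda_k^\mu}.
\]
The corresponding formula for $R(1:\mathscr{A}_0)\sbm{f \\ g}$ is identical with $\lambda_k^\mu$ replaced by the eigenvalue $k^2$ of $A_0$.

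The second step is the termwise convergence. Since $\tanh x\sim x$ as $x\to 0$, for every fixed $k$ we have $\lambda_k^\mu\to k^2$ as $\mu\to 0$, so each Fourier coefficient of $R(1:\mathscr{A}_\mu)\sbm{f \\ g}$ converges to the corresponding coefficient of $R(1:\mathscr{A}_0)\sbm{f \\ g}$. The third step is to find a $\mu$-uniform summable majorant. The key observation is that the $2\times 2$ matrix $\bigl[\begin{smallmatrix}1 & \sqrt{\lambda_k^\mu}\\ -\sqrt{\lambda_k^\mu} & 1\end{smallmatrix}\bigr]$ equals $\sqrt{1+\lambda_k^\mu}$ times an orthogonal matrix, so
\[
  |\varphi_k^\mu|^2+|\psi_k^\mu|^2\m=\m\frac{|f_k|^2+|g_k|^2}{1+\lambda_k^\mu}\m\leq\m |f_k|^2+|g_k|^2,
\]
uniformly in $\mu$, and similarly for the limit problem. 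Hence
\[
  |\varphi_k^\mu-\varphi_k^0|^2+|\psi_k^\mu-\psi_k^0|^2\m\leq\m 4\bigl(|f_k|^2+|g_k|^2\bigr),
\]
which is summable since $f,g\in H$.

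Finally, applying the dominated convergence theorem for sums to
\[
  \left\lVert R(1:\mathscr{A}_\mu)\sbm{f \\ g}-R(1:\mathscr{A}_0)\sbm{f \\ g}\right\rVert_X^2 \m=\m \sum_{k\geq 0}\Bigl(|\varphi_k^\mu-\varphi_k^0|^2+|\psi_k^\mu-\psi_k^0|^2\Bigr),
\]
yields the claimed limit \rfb{conv-resol}. I do not expect a genuine obstacle here: the content of the argument is really the explicit diagonalization of both resolvents in the \emph{same} Fourier basis, together with the elementary fact $\lambda_k^\mu\to k^2$; the only mild subtlety is that the $k=0$ mode has $\lambda_0^\mu=0=k^2$ and must be separated or accommodated by the convention, which is automatic since both numerators and denominators agree at $k=0$.
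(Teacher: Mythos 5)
Your argument is correct, and it shares the paper's central idea: diagonalize both $\mathscr{A}_\mu$ and $\mathscr{A}_0$ in the common eigenbasis $(\varphi_k)$, reduce the resolvent to a mode-by-mode $2\times 2$ inversion, and pass to the limit using the fact that $\lambda_k^\mu=\frac{k\tanh(\sqrt\mu\,k)}{\sqrt\mu}\to k^2$ for each fixed $k$. Your explicit formulas $\varphi_k^\mu=\frac{f_k-\sqrt{\lambda_k^\mu}\,g_k}{1+\lambda_k^\mu}$, $\psi_k^\mu=\frac{g_k+\sqrt{\lambda_k^\mu}\,f_k}{1+\lambda_k^\mu}$ are right (and, incidentally, avoid a notational slip in the paper, where the roles of $\varphi_\mu$ and $\psi_\mu$ are interchanged in the resolvent formulas without consequence for the argument). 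The remark about the $k=0$ mode being trivially matched is correct and worth keeping.

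Where you diverge from the paper is in the convergence mechanism. The paper writes the difference of resolvents as a Fourier multiplier, namely $F_\mu(k)=\frac{1}{1+k^2}-\frac{1}{1+\lambda_k^\mu}$ for the $R(-1:\cdot)g$ part and an analogous $G_\mu(k)$ for the $R(-1:\cdot)A^{1/2}f$ part, represents each as $-\int_0^{\sqrt\mu\,k}(\cdot)'\,\mathrm{d}x$ with $h(x)=\tanh x/x$, and bounds the integrand to obtain $|F_\mu(k)|\le\sqrt\mu/k$ and $|G_\mu(k)|\le C\sqrt\mu$, hence a quantitative rate $\lVert R(1:\mathscr{A}_\mu)\sbm{f\\g}-R(1:\mathscr{A}_0)\sbm{f\\g}\rVert_X\le C\sqrt\mu\,\lVert\sbm{f\\g}\rVert_X$. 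You instead observe that the mode map is $(1+\lambda_k^\mu)^{-1/2}$ times an orthogonal matrix, which gives a uniform-in-$\mu$ summable majorant $4(|f_k|^2+|g_k|^2)$ and invoke dominated convergence. Both are valid proofs of the lemma as stated; yours is softer and perhaps cleaner, while the paper's explicit multiplier bounds are designed to be reused: the estimates $|F_\mu(k)|\le\sqrt\mu/k$ and $|G_\mu(k)|\le C\sqrt\mu$ reappear verbatim in the proof of Lemma~\ref{A-B}, where the $\mu^{-1}B_\mu$ factor forces one to trade $k$-decay against $\mu$-smallness, and a rateless DCT argument would not by itself give what is needed there. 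So your proof settles Lemma~\ref{u=0}, but if you also want to reach Lemma~\ref{A-B} you will need to circle back and derive quantitative multiplier estimates anyway.
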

\begin{proof}
According to Lemma \ref{skew-adjoint}, the operators $\mathscr{A}_\mu, 
\mathscr{A}_0\m\in G(1,0)$, which implies that $1\in \rho(\mathscr{A}_\mu)
\cap\rho(\mathscr{A}_0)$. We denote, for every $\sbm{f \\ g}\in X$, 
$$ \bbm{\varphi_\mu \\ \psi_\mu}\m=\m R(1:\mathscr{A}_\mu)
   \bbm{f \\ g},\qquad \bbm{\varphi_0 \\ \psi_0}=R(1:\mathscr{A}_0)
   \bbm{f \\ g}. $$
It follows that \vspace{-0.5mm}
$$\varphi_\mu=-R\left(-1:\frac{1}{\mu}A_\mu\right)
  \left[\left(\frac{1}{\mu}A_\mu\right)^{1/2}f+g\right], \vspace{-1mm}$$
$$\psi_\mu=R\left(-1:\frac{1}{\mu}A_\mu\right)
  \left[-f+\left(\frac{1}{\mu}A_\mu\right)^{1/2}g\right],\vspace{-1mm}$$
and \vspace{-1mm}
$$ \varphi_0=-R(-1:A_0)(A_0^{1/2}f+g),\qquad \psi_0=R(-1:A_0)
   (-f+A_0^{1/2}g).$$ 
As explained in the proof of Lemma \ref{skew-adjoint} and Remark \ref{embed}, 
we have $\sbm{\varphi_\mu \\ \psi_\mu}\in X$ and 
$\sbm{\varphi_0 \\ \psi_0}\in X$, which means that the expression in 
\rfb{conv-resol} makes sense.

Next we prove the convergence of each component of \rfb{conv-resol} in 
$H$ as $\mu$ goes to zero. Since $\frac{1}{\mu}A_\mu$ and $A_0$ are 
diagonablizable operators, according to \cite[Proposition 2.6.2]{Obs_Book},
we obtain from \rfb{forma1serie} and \rfb{def-A0} that \vspace{-0.5mm}
$$ R\left(-1:\frac{1}{\mu}A_\mu\right)g-R(-1:A_0)g\m
  =\m\sum_{k\in\nline}F_\mu(k)\langle g,\varphi_k\rangle \varphi_k, 
  \vspace{-0.5mm} $$
with \vspace{-0.5mm}
\begin{equation}\label{F}
  F_\mu(k) =\m\frac{1}{1+k^2}-\frac{1}{1+\frac{k\tanh(\sqrt{\mu}k)}
  {\sqrt{\mu}}}.
\end{equation} 
Denoting $h(x)=\frac{\tanh x}{x}$ ($h(0):=1$), we have \vspace{-0.5mm}
$$ F_\mu(k)=-\int_0^{\sqrt{\mu}k} \left(\frac{1}{1+k^2h(x)}\right)'
   \dd x. \vspace{-0.5mm}$$
Note that 
$$ \left|\left(\frac{1}{1+k^2h}\right)'\right| \m\leq\m 
   \frac{-h'}{k^2 h^2},$$
and $\frac{-h'}{h^2}\leq 1$ on $[0,\infty)$, which implies that 
$|F_\mu(k)|\leq \frac{\sqrt{\mu}}{k}$. We thus arrive at 
\begin{equation}\label{lemma-1}
  \left\lVert R\left(-1:\frac{1}{\mu}A_\mu\right)g-R(-1:A_0)g\m
  \right\rVert^2\leq  \mu\lVert g\rVert^2.
\end{equation}
Similarly, for every $f\in H$ we have \vspace{-0.5mm}
$$ R\left(-1:\frac{1}{\mu}A_\mu\right)\left(\frac{1}{\mu}A_\mu
   \right)^{1/2}f -R(-1:A_0) A_0^{1/2}f\m=\m
   \sum_{k\in\nline}G_\mu(k) \langle f,\varphi_k\rangle \varphi_k,
   \vspace{-0.5mm} $$
with  
\begin{equation}\label{Gmu}
  G_\mu(k)=\frac{k}{1+k^2}-\frac{\left(\frac{k\tanh(\sqrt{\mu}k)}
  {\sqrt{\mu}}\right)^{1/2}}{1+\frac{k\tanh(\sqrt{\mu}k)}{\sqrt{\mu}}}.
\end{equation}
We similarly have \vspace{-1mm}
$$ G_\mu(k)=-\int_0^{\sqrt{\mu}k} \left(\frac{k h^{1/2}(x)}{1+k^2h(x)}
   \right)' \dd x.  $$
It is not difficult to see that 
$$\left|\left(\frac{k h^{1/2}}
{1+k^2h}\right)' \right|\leq \frac{-h'}{2k\m h^{3/2}}.$$
Moreover, note that $\frac{-h'}{h^{3/2}}$ is bounded on $[0,\infty)$, 
we thus obtain that 
\begin{equation}\label{Gmu1}
|G_\mu(k)|\leq C \sqrt{\mu},
\end{equation}
which yields that
$$ \left\lVert R\left(-1:\frac{1}{\mu}A_\mu\right)\left(\frac{1}{\mu}A_\mu
   \right)^{1/2}f -R(-1:A_0) A_0^{1/2}f\m \right\rVert^2 
   \leq C \mu\left\lVert f\right\rVert^2. $$ 
This, together with \rfb{lemma-1}, implies that $\displaystyle\lim_{\mu\to0}
\varphi_\mu=\varphi_0$ in $H$ and $\displaystyle\lim_{\mu\to0}\psi_\mu=\psi_0$ 
in $H$, which ends the proof.
\end{proof}

\begin{lemma}\label{A-B}
Let $\mathscr{A}_\mu$ and $\mathscr{A}_0$ be the same as in Lemma
\ref{u=0}. For the operators $\mathscr{B}_\mu$ and $\mathscr{B}_0$ 
defined in \rfb{curB}, for every $ u\in\cline$ we have
\begin{equation*}
  \lim_{\mu\to0}R(1:\mathscr{A}_\mu)\mathscr{B}_\mu u= R(1:\mathscr{A}_0)
  \mathscr{B}_0 u \qquad {\rm in} \quad X.
\end{equation*}
\end{lemma}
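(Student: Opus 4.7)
My plan is to reduce the statement to an explicit Fourier-series computation in the orthonormal basis $(\varphi_k)_{k\ge 0}$ and conclude by dominated convergence. Applying the formulas for $R(1:\mathscr{A}_\mu)$ from the proof of Lemma~\ref{u=0} with $f=\frac{1}{\mu}B_\mu u$ and $g=0$, and exploiting that $R\bigl(-1:\frac{1}{\mu}A_\mu\bigr)$ commutes with $\bigl(\frac{1}{\mu}A_\mu\bigr)^{1/2}$ on $H$ (they share the eigenbasis $\varphi_k$), I would write
\[
R(1:\mathscr{A}_\mu)\mathscr{B}_\mu u \m=\m \bbm{\chi_\mu \\ \bigl(\tfrac{1}{\mu}A_\mu\bigr)^{1/2}\chi_\mu},\qquad
\chi_\mu \m:=\m \Bigl(I+\tfrac{1}{\mu}A_\mu\Bigr)^{\n-1}\tfrac{1}{\mu}B_\mu u,
\]
and analogously $R(1:\mathscr{A}_0)\mathscr{B}_0 u=\bbm{\chi_0 \\ A_0^{1/2}\chi_0}$ with $\chi_0:=(I+A_0)^{-1}B_0 u$ (interpreted via the extension of $R(-1:A_0)$ to $\mathbb{H}_{-1/2}$ from Remark~\ref{embed}). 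It therefore suffices to prove that $\chi_\mu\to\chi_0$ and $\bigl(\tfrac{1}{\mu}A_\mu\bigr)^{1/2}\chi_\mu\to A_0^{1/2}\chi_0$, both in $H$.

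The heart of the argument is an explicit formula for the Fourier coefficients of $\frac{1}{\mu}B_\mu u$. Starting from the series in Proposition~\ref{NN} and performing two integrations by parts on
\[
\int_0^\pi\!\cos(kx)\cosh\bigl[\omega_n(x-\pi)\bigr]\dd x \m=\m \frac{\omega_n\sinh(\omega_n\pi)}{\omega_n^2+k^2},\qquad
\omega_n=\tfrac{(2n-1)\pi}{2\sqrt\mu},
\]
then summing over $n$ via the classical partial-fraction identity $\tanh z=\sum_{n\ge 1}8z/[(2n-1)^2\pi^2+4z^2]$, I obtain
\[
\bigl\langle\tfrac{1}{\mu}B_\mu u,\varphi_0\bigr\rangle \m=\m -\tfrac{u}{\sqrt\pi},\qquad
\bigl\langle\tfrac{1}{\mu}B_\mu u,\varphi_k\bigr\rangle \m=\m -\sqrt{\tfrac{2}{\pi}}\,u\,t_k\quad (k\ge 1),
\]
where $t_k:=\tanh(\sqrt\mu k)/(\sqrt\mu k)\in(0,1]$; the limit $\langle B_0 u,\varphi_k\rangle=-u\varphi_k(0)$ corresponds to $t_k=1$, consistent with Theorem~\ref{Bmuconverg}. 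Since the eigenvalues of $\frac{1}{\mu}A_\mu$ are $k^2 t_k$ by Proposition~\ref{DN}, for every $k\ge 1$ this gives
\[
\langle\chi_\mu,\varphi_k\rangle \m=\m \frac{\sqrt{2/\pi}\,u\,t_k}{1+k^2 t_k},\qquad
\Bigl\langle\bigl(\tfrac{1}{\mu}A_\mu\bigr)^{1/2}\chi_\mu,\varphi_k\Bigr\rangle \m=\m \frac{\sqrt{2/\pi}\,u\,k\,t_k^{3/2}}{1+k^2 t_k},
\]
with the limit coefficients obtained by substituting $t_k=1$.

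For each fixed $k$, $t_k\to 1$ as $\mu\to 0$, yielding pointwise convergence of every Fourier coefficient. To justify termwise passage to the limit in the $\ell^2$-sum, I would use the elementary inequalities
\[
\frac{t_k}{1+k^2 t_k}\m\le\m \frac{1}{k^2},\qquad
\frac{k\,t_k^{3/2}}{1+k^2 t_k}\m\le\m \frac{1}{k}\qquad (k\ge 1,\ t_k\in(0,1]),
\]
the second one being equivalent to $k^2 t_k(1-t_k^{1/2})\ge 0$. The squared errors $|\langle\chi_\mu-\chi_0,\varphi_k\rangle|^2$ and $|\langle(\tfrac{1}{\mu}A_\mu)^{1/2}\chi_\mu-A_0^{1/2}\chi_0,\varphi_k\rangle|^2$ are then dominated by a constant multiple of $1/k^2$, and Lebesgue's dominated convergence theorem delivers convergence in $H$ of both components, hence the claim in $X$. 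The main obstacle I expect is the explicit Fourier computation above: the tempting abstract splitting $[R(1:\mathscr{A}_\mu)-R(1:\mathscr{A}_0)]\mathscr{B}_\mu u+R(1:\mathscr{A}_0)(\mathscr{B}_\mu u-\mathscr{B}_0 u)$ is hampered by the fact that $\|\frac{1}{\mu}B_\mu u\|_H$ blows up like $\mu^{-1/4}$ as $\mu\to 0$ (only the weaker Theorem~\ref{Bmuconverg} controls it), whereas the estimates $F_\mu$, $G_\mu$ from Lemma~\ref{u=0} are calibrated to $H$-norm inputs; the needed cancellation becomes transparent only through the matching between $\tanh(\sqrt\mu k)$ in the coefficients of $\frac{1}{\mu}B_\mu u$ and the same factor in the spectrum of $\frac{1}{\mu}A_\mu$.
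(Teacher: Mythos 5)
Your proof is correct, and it takes a genuinely different and in fact cleaner route than the paper's.

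The paper proceeds by the triangle-inequality decomposition you dismiss at the end: it splits $\tilde\varphi_\mu-\tilde\varphi_0$ into $[R(-1:\tfrac{1}{\mu}A_\mu)-R(-1:A_0)]\tfrac{1}{\mu}B_\mu u$ plus $R(-1:A_0)(\tfrac{1}{\mu}B_\mu u-B_0u)$, handles the second term via the separately-proved Theorem~\ref{Bmuconverg} together with the bound $\lVert R(-1:A_0)\rVert_{\Lscr(\mathbb{H}_{-1/2},\mathbb{H}_{1/2})}\le 1$, and for the first term writes the Fourier series $\sum_k|F_\mu(k)|^2\bigl|\langle\tfrac{1}{\mu}B_\mu u,\varphi_k\rangle\bigr|^2$ and estimates $\langle\tfrac{1}{\mu}B_\mu u,\varphi_k\rangle$ by \emph{bounding}, not evaluating, the inner sum $\sum_l H_\mu(k,l)$ (using two crude bounds, \rfb{Hmu} and \rfb{Hmu2}, chosen to match $|F_\mu|\le\sqrt\mu/k$ and $|G_\mu|\le C\mu^{1/4}/\sqrt k$). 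Your criticism that the naive splitting is blocked by $\lVert\tfrac{1}{\mu}B_\mu u\rVert_H\sim\mu^{-1/4}$ is not quite fair to the paper, because the paper never multiplies a resolvent-norm bound by that $H$-norm: it combines both factors inside one Fourier series, where the product $F_\mu(k)\cdot\langle\tfrac{1}{\mu}B_\mu u,\varphi_k\rangle$ decays fast enough. What you do instead is \emph{evaluate} $\sum_l H_\mu(k,l)$ in closed form via the Mittag--Leffler expansion of $\tanh$, uncovering the identity $\langle\tfrac{1}{\mu}B_\mu u,\varphi_k\rangle=\langle B_0u,\varphi_k\rangle\,t_k$ with $t_k=\tanh(\sqrt\mu k)/(\sqrt\mu k)$; since $t_k$ is exactly the ratio of the $k$-th eigenvalues of $\tfrac{1}{\mu}A_\mu$ and $A_0$, the resolvent applied to $\tfrac{1}{\mu}B_\mu u$ has coefficients that are an elementary rational function of $t_k$, and dominated convergence finishes. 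Your approach therefore bypasses Theorem~\ref{Bmuconverg} entirely and makes the cancellation structurally transparent; what it gives up, relative to the paper, is an explicit rate of decay (the paper gets $O(\mu^{1/4})$, though your computation would in fact yield $O(\sqrt\mu)$ with a bit more work). One small slip: with $B_0=-\delta_0$ the Fourier coefficients of $\chi_\mu$ carry a minus sign, $\langle\chi_\mu,\varphi_k\rangle=-\sqrt{2/\pi}\,u\,t_k/(1+k^2t_k)$; this is immaterial to the convergence argument.
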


\begin{proof}
For every $u\in\cline$, it is obvious that $R(1:\mathscr{A}_\mu)
\mathscr{B}_\mu u \in X$ since $\mathscr{B}_\mu\in\Lscr(\cline,X)$. 
Note that $\mathscr{B}_0\in \Lscr(\cline, \mathbb{H}_{-\frac{1}{2}}
\times\mathbb{H}_{-\frac{1}{2}})$, according to Remark \ref{embed}, 
then we have $R(1:\mathscr{A}_0)\mathscr{B}_0 u\in X$. For every 
$u\in\cline$, let \vspace{-1mm}
$$ \bbm{\tilde \varphi_\mu \\ \tilde \psi_\mu}\m=\m R(1:\mathscr{A}_\mu)
   \mathscr{B}_\mu u,\qquad \bbm{\tilde \varphi_0 \\ \tilde \psi_0}=
   R(1:\mathscr{A}_0)\mathscr{B}_0 u. $$
We immediately have \vspace{-1mm}
$$ \tilde \varphi_\mu=-R\left(-1:\frac{1}{\mu}A_\mu\right) 
   \frac{1}{\mu}B_\mu u, \vspace{-1mm}$$
$$ \tilde \psi_\mu=-R\left(-1:\frac{1}{\mu}A_\mu\right)\left(
   \frac{1}{\mu}A_\mu \right)^{1/2}\frac{1}{\mu}B_\mu u,$$
and 
$$ \tilde \varphi_0=-R(-1:A_0)B_0u,\qquad \tilde \psi_0
   =-R(-1:A_0)A_0^{1/2}B_0u.$$
For the sake of clarity, we split the remaining proof into two steps.

{\bf Step 1}: {We prove that $\displaystyle\lim_{\mu\to0}\tilde \varphi_\mu 
=\tilde \varphi_0$ in $H$.} 
To this aim, we first note, using a triangle inequality, that \vspace{-1mm}
\begin{multline}\label{I}
  \left\lVert \tilde \varphi_\mu-\tilde \varphi_0\right\rVert
  \leq \m\left\lVert \left[R\left(-1:\frac{1}{\mu}A_\mu\right)-
  R(-1:A_0)\right]\frac{1}{\mu}B_\mu u\m\right\rVert\\
  \m+\m\left\lVert R(-1:A_0)\left(\frac{1}{\mu}B_\mu u
  -B_0u\right)\right\rVert, \vspace{-2mm}
\end{multline}
where we used the fact that $R(-1:A_0)\frac{1}{\mu}B_\mu u \in H$. 
We note that $R(-1:A_0)\in\Lscr(\mathbb{H}_{-\frac{1}{2}}, 
\mathbb{H}_{\frac{1}{2}})$ and this is unitary. Indeed, by the 
Riesz representation theorem, for every $f\in \mathbb{H}_{-\frac{1}{2}}$, 
there exists a unique $\varphi\in \mathbb{H}_{\frac{1}{2}}$ 
such that \vspace{-1mm}
\begin{equation}\label{lax}
  \langle \varphi, \psi\rangle +\langle A_0^{1/2}\varphi, A_0^{1/2}
  \psi\rangle = \langle f, \psi\rangle_{\mathbb{H}_{-\frac{1}{2}}, 
  \mathbb{H}_{\frac{1}{2}}}
  \qquad \FORALL \psi\in\mathbb{H}_{\frac{1}{2}}, 
\end{equation}
which is $(I+A_0)\varphi=f$ in $ \mathbb{H}_{-\frac{1}{2}}$. 
Taking $\psi=\varphi$ in \rfb{lax}, it follows that 
$$ \lVert\m\varphi\m\rVert_{\mathbb{H}_{\frac{1}{2}}}^2\leq \lVert f\m
   \rVert_{\mathbb{H}_{-\frac{1}{2}}}\cdot\lVert\m\varphi\m\rVert_{
   \mathbb{H}_{\frac{1}{2}}}, $$
which implies that $\lVert \varphi\rVert_{\mathbb{H}_{\frac{1}{2}}}\leq 
\lVert f\rVert_{\mathbb{H}_{-\frac{1}{2}}}$, i.e. $\lVert R(-1:A_0)
\rVert_{\Lscr(\mathbb{H}_{-\frac{1}{2}},\mathbb{H}_{\frac{1}{2}})}\leq 1$. 
This, together with \rfb{maincov}, yields that the last term 
on the right-hand side of \rfb{I} converges to zero, where we used the fact 
that $\mathbb{H}_{\frac{1}{2}}=W^{1,2}[0,\pi]$ and the continuous dense 
embedding $\mathbb{H}_{\frac{1}{2}}\hookrightarrow H$. 

Next we estimate the square of the first norm on the right side of \rfb{I}. 
According to \cite[Proposition 2.6.2]{Obs_Book}, we write it in form of 
Fourier series, which reads 
\begin{equation}\label{I1}
\sum_{k\in\nline} \left|F_\mu(k)\right|^2\cdot\left|
\left\langle \frac{1}{\mu}B_\mu u, \varphi_k\right\rangle\right|^2, 
\end{equation} 
where $F_\mu(k)$ has been defined in \rfb{F}. According to \rfb{Bmu}, 
it is not difficult to see that 
\begin{equation}\label{Bmuformula}
\left\langle \frac{1}{\mu}B_\mu u, \varphi_k\right\rangle \m =\m
\frac{-2\sqrt{2}u}{ \mu\sqrt{\pi}} \sum_{l\in\nline}\m H_\mu(k,l),
\end{equation}
with 
$$ H_\mu(k,l)\m=\m \frac{1}{\left(\frac{2l-1}{2\sqrt{\mu}}\pi\right)^2+k^2}.$$
Moreover, we readily see that 
\begin{equation}\label{Hmu}
 \sum_{l\in\nline}H_\mu(k,l) \leq \sum_{l\in\nline}\frac{4\mu}{(2l-1)^2\pi^2}
 =\frac{\mu}{2}.
\end{equation}
Recalling that $|F_\mu(k)|\leq \frac{\sqrt{\mu}}{k}$, we thus 
conclude that \rfb{I1} can be controlled by $C\mu$. Therefore, we obtain that 
\rfb{I} converges to zero as $\mu\to0$.

{\bf Step 2}: {We prove that $\displaystyle\lim_{\mu\to0}\tilde \psi_\mu =\tilde 
	\psi_0$ in $H$.}  
Since $R(-1:A_0)A_0^{1/2}\frac{1}{\mu}B_\mu u\in H$ for fixed $\mu$, we consider 
the following triangle inequality: \vspace{-1mm}
\begin{multline}\label{II}
  \left\lVert \tilde \psi_\mu-\tilde \psi_0 \right\rVert
  \leq \m\left\lVert R(-1:A_0)A_0^{1/2}\left(\frac{1}{\mu}B_\mu u
  -B_0u\right)\right\rVert \\
  +\m\left\lVert \left[R\left(-1:\frac{1}{\mu}A_\mu\right) \left(
  \frac{1}{\mu}A_\mu \right)^{1/2}-R(-1:A_0)A_0^{1/2}\right]
  \frac{1}{\mu}B_\mu u\m\right\rVert.\vspace{-2mm}
\end{multline}
We first, using Fourier series, prove that $R(-1:A_0)A_0^{1/2}\in 
\Lscr(\mathbb{H}_{-\frac{1}{2}}, H)$. For every $f\in 
\mathbb{H}_{-\frac{1}{2}}$, we have $f=\langle f, 1\rangle\frac{1}{\pi}
+\sum_{k\in\nline} (1+k^2)^{-1/2} \langle f,\varphi_k\rangle\varphi_k$ 
and then, according to \cite[Proposition 2.6.2]{Obs_Book}, \vspace{-1mm}
$$ R(-1:A_0)A_0^{1/2}f=\sum_{k\in\nline} \frac{-k}{(1+k^2)^{3/2}}\langle f,
  \varphi_k\rangle\varphi_k.$$
It follows that \vspace{-0.5mm}
$$\left\lVert R(-1,A_0)A_0^{1/2}f\right\rVert^2\leq \lVert f\rVert_
  {\mathbb{H}_{-\frac{1}{2}}}^2.  $$
This, combined with \rfb{maincov}, implies that the first 
norm on the right side of \rfb{II} converges to zero. 

Note that the square of the second norm on the right side of \rfb{II} is
$$ \frac{8\m u^2}{\mu^2\pi} \sum_{k\in\nline}\left|G_\mu(k)
   \right|^2\left|\sum_{l\in\nline} H_\mu(k,l)\right|^2,$$
where $G_\mu(k)$ and $H_\mu(k,l)$ are defined in \rfb{Gmu} and 
\rfb{Bmuformula}, respectively. Different with Step 1 we need here a more 
precise estimate for $G_\mu(k)$ and $H_\mu(k,l)$, such that the double 
series goes to zero as $\mu\to 0$. Besides \rfb{Hmu}, we have the 
following alternative estimate
\begin{equation}\label{Hmu2}
  \sum_{l\in\nline} H_\mu(k,l)=\sum_{l\in\nline}\frac{\mu}{l^2+
  (\sqrt{\mu}k)^2}\leq\sum_{l\leq\sqrt{\mu}k}\frac{1}{ k^2}+
  \sum_{l\geq\sqrt{\mu}k}\frac{\mu}{l^2}\leq 2\frac{\sqrt{\mu}}{k},
\end{equation}
where we used the fact $\sum_{k\geq a}\frac{1}{k^2}\leq\frac{1}{a}$.
Recalling the definition of $G_\mu(k)$ and the function 
$h$ introduced in the proof of Lemma \ref{u=0}, we have 
$$|G_\mu(k)|=\left|\frac{k(1-h_{\mu,k})(1-k^2
	h_{\mu,k})}{(1+k^2h^2_{\mu,k})(1+k^2)}
  \right|,$$
where $h_{\mu,k}=\left(h(\sqrt{\mu}k)\right)^{1/2}$. If $\sqrt{\mu}k
\leq\delta<1$, we still use the estimate \rfb{Gmu1} $|G_\mu(k)|\leq C\sqrt{\mu}$. 
It is not difficult to see that in this case we further have $|G_\mu(k)|\leq 
C\frac{\mu^{1/4}}{k^{1/2}}$. If $\sqrt{\mu}k>\delta$, there exists $c>0$ 
such that $\tanh(\sqrt{\mu}k)\geq c$, which yields that 
$$ 1>h_{\mu,k}\geq \frac{ C }{\mu^{1/4}k^{1/2}}.$$
It follows that we have 
\begin{equation}\label{Gmu2}
 |G_\mu(k)|\m\leq\m \frac{k\left|1-k^2 h_{\mu,k} \right|}{k^4h^2_{\mu,k}}
 \leq C\frac{\mu^{1/4}}{k^{1/2}}. 
\end{equation}
If $k^2h_{\mu,k}\geq1$, \rfb{Gmu2} is a direct consequence. Otherwise, we still 
have
$$ |G_\mu(k)|\leq \frac{1}{k^3\m h_{\mu,k}^2}\leq C \frac{\mu^{1/2}}{k^2}\leq 
   C\frac{\mu^{1/4}}{k^{1/2}},$$
since $\mu<1$. We thus conclude that \rfb{Gmu2} holds for every $\mu\in(0,1)$ 
and $k\in\nline$. Putting together \rfb{Gmu1}, \rfb{Hmu}, \rfb{Hmu2} and \rfb{Gmu2} 
we thus arrive at 
$$ \frac{1}{\mu^2}\sum_{k\in\nline}\left|G_\mu(k)
   \right|^2\left|\sum_{l\in\nline} H_\mu(k,l)\right|^2\leq C \mu^{1/4}.$$

Therefore, the proof of Lemma \ref{A-B} is completed.
\end{proof}

\section{Proof of the main result}\label{5}
For any $\omega\in\rline$, we introduce the Hilbert space 
$L^2_\omega[0,\infty):={\rm e}_\omega L^2[0,\infty)$, where 
$({\rm e}_\omega v)(t)={\rm e}^{\omega t} v(t)$ for every $t\geq0$, 
with the norm $\lVert  v\rVert_{L^2_\omega}=\lVert {\rm e}_{-\omega} 
v\rVert_{L^2}$. Similarly, the Hilbert space 
$W^{1,2}_\omega[0,\infty):={\rm e}_\omega W^{1,2}[0,\infty)$ contains 
the elements $({\rm e}_\omega \nu)(t)={\rm e}^{\omega t} \nu(t)$ 
for every $\nu\in W^{1,2}[0,\infty)$, with the norm $\lVert\nu\rVert_{W^{1,2}_\omega}
=\lVert {\rm e}_{-\omega} \nu\rVert_{W^{1,2}}$. For every $\tau>0$, we 
consider the zero extension of the input space $L^2[0,\tau]$ on $(\tau,\infty)$, 
which gives a subspace of $L^2_\omega[0,\infty)$.

Now we are in a position to prove Theorem \ref{result}.

\begin{proof}[Proof of Theorem \ref{result}] 
To present the proof clearly, we divide it into the following three steps.

{\bf Step 1}: {\em The convergence of a scattering semigroup.} For every 
$u\in\Uscr=L_\omega^2[0,\infty)$, according to Lemma \ref{skew-adjoint}, 
we denote by $\tline_\mu=(\tline_{\mu,t})_{t\geq0}$ 
the $C_0$-semigroup generated by the operator $\mathscr{A}_\mu$, and by 
$\tline=(\tline_t)_{t\geq0}$ the $C_0$-semigroup generated by $\mathscr{A}_0$. 
Since $\mathscr{A}_\mu,\m \mathscr{A}_0\in G(1,0)$, then the growth bound 
$\omega(\tline)=0=\omega(\tline_\mu)$. The solutions of the differential 
equations \rfb{newmu-1st} and \rfb{new-1st} are \vspace{-0.5mm}
$$ w_\mu=\tline_{\mu,t}\m w_{\mu,0}+\Phi_{\mu,t}\m u, \vspace{-1mm}$$
and
$$ w=\mathbb{T}_t\m w_0+\Phi_t\m u, \vspace{-1mm}$$
where the initial data $w_{\mu,0}$ and $w_0$ are introduced in \rfb{initial}. 
Note that it is not difficult to check that $\mathscr{B}_0$ is an admissible 
control operator, then the controllability map $\Phi_t$ defined by \vspace{-1mm}
$$ \Phi_t u=\int_0^t \tline_{t-\sigma}\m\mathscr{B}_0 u(\sigma)\m 
   \dd\sigma,\vspace{-1mm}$$
is bounded from $\Uscr$ to $X$. Similarly, we have $\Phi_{\mu,t}\in
\Lscr(\Uscr, X)$ since $\mathscr{B}_\mu\in \Lscr(\cline,X)$. To justify 
the limit $\displaystyle\lim_{\mu\to0}w_\mu=w$ in $X$, we first define 
bounded operators $\mathfrak{T}_{\mu,t}$ and $\mathfrak{T}_t$ by 
$$\mathfrak{T}_{\mu,t}=\bbm{ \tline_{\mu,t} & \Phi_{\mu,t}\\ 0 &  S_t},
  \qquad \mathfrak{T}_{t}=\bbm{\tline_{t} & \Phi_{t}\\ 0 &  S_t },$$
where $(S_t)_{t\geq0}$ is the unilateral left shift simigroup on $\Uscr$,
i.e. $S_tu(\xi)=u(\xi+t)$ for every $\xi\geq0$. Then 
$(\mathfrak{T}_{\mu,t})_{t\geq0}$ and $(\mathfrak{T}(t))_{t\geq0}$ 
form $C_0$-semigroups on $X\times\Uscr$, respectively, with the same growth 
bound $\omega>\omega(\tline)=0$ (Such semigroups were used in 
\cite[Section 4.1]{Obs_Book}, \cite{StWe} and \cite{GrCa}). The generators 
of $\mathfrak{T}_{\mu,t}$ and $\mathfrak{T}_{t}$ are 
$$ \Ascr_\mu=\bbm{\mathscr{A}_\mu & \mathscr{B}_\mu\delta_0 \\ 0 & 
	\frac{d}{d\xi}}, \qquad  \Ascr_0=\bbm{\mathscr{A}_0 & \mathscr{B}_0
	\delta_0 \\ 0 & \frac{d}{d\xi} }, $$
where $\delta_0u(\xi)=u(0)$ for every $u\in\Uscr$, and
$$ \Dscr(\Ascr_\mu)=\left\{\bbm{x_0 \\ u_0}\in X\times W^{1,2}_\omega[0,\infty)
   \left|\m \mathscr{A}_\mu x_0+\mathscr{B}_\mu u_0(0)\in X\right.\right\}. $$
Similarly, $\Dscr(\Ascr_0)$ can be defined by using $\mathscr{A}_0$ and 
$\mathscr{B}_0$ in the above set. Here for simplicity we choose 
$\omega\in(0,1)$ such that $1\in\rho(\Ascr_\mu)\cap\rho(\Ascr_0)$.
Setting, for every $\sbm{x \\u}\in X\times \Uscr$,
$$ \bbm{x_{\mu,0} \\ u_0}=R(1:\Ascr_\mu)\bbm{x \\u},\qquad 
   \bbm{x_0 \\ u_0}=R(1:\Ascr_0)\bbm{x \\ u},$$
we have 
\begin{equation}\label{sol-re}
 \left\{ \begin{aligned}
  &x_0-\mathscr{A}_0x_0-\mathscr{B}_0 u_0(0)=x,\\
  &u_0-\frac{du_0}{d\xi}=u.
 \end{aligned} \right.
\end{equation}
The second equation in \rfb{sol-re} admits a unique solution $u_0$ given
via its Laplace transform
$$\hat{u}_0(s)=\frac{\hat{u}(s)-u_0(0)}{1-s}. $$
According to the Paley-Wiener theorem (see, for instance, 
\cite[Theorem 12.4.2]{Obs_Book}), $u_0(0)=\hat{u}(1)$ is the only 
choice such that $\hat{u}_0(s)\in\Hscr^2(\cline_0)$, where $\Hscr^2(\cline_0)$ 
is the Hardy space with $\cline_0=\{s\in\cline|\Re s>0\}$. We obtain from 
\rfb{sol-re} that
$$ x_0=R(1:\mathscr{A}_0)(x+\mathscr{B}_0 u_0(0)). \vspace{-1mm}$$
Similarly, we have \vspace{-1mm} 
$$ x_{\mu,0}=R(1:\mathscr{A}_\mu)(x+\mathscr{B}_\mu u_0(0)).$$
Recalling Lemma \ref{u=0} and Lemma \ref{A-B} we thus conclude that
$\displaystyle\lim_{\mu\to0}x_{\mu,0}= x_0$ in $X$.
It yields that, for every $\sbm{x \\u}\in X\times \Uscr$,
$$ \lim_{\mu\to0}R(1:\Ascr_\mu)\bbm{x \\ u}= R(1:\Ascr_0)\bbm{x \\ u}\qquad 
   \text{in}\quad X\times \Uscr.$$    
By applying the Trotter-Kato theorem (see, for instance, \cite[Chapter 3]{Pazy}), 
it follows that
$$ \lim_{\mu\to0}\mathfrak{T}_{\mu,t} \bbm{x \\ u}= \mathfrak{T}_t
   \bbm{x \\ u}\qquad \text{in}\quad X\times \Uscr,$$   
uniformly with respect to $t$ on compact intervals. Thus we have
$$ \lim_{\mu\to0}(\tline_{\mu,t}\m x+\Phi_{\mu,t}\m u)= \tline_t\m x
   +\Phi_t\m  u\qquad \text{in}\quad X\times \Uscr. $$
In particular (when $u=0$), for every $x\in X$, we have \vspace{-1mm}
\begin{equation}\label{1}
\lim_{\mu\to0}\tline_{\mu,t}x=\tline_t x\qquad  \text{in}\quad X.
\end{equation} 
 
{\bf Step 2}: {We prove that $\displaystyle\lim_{\mu\to0}w_\mu = w$ in $X$.} 
In order to justify this limit, it suffices to prove 
$\displaystyle\lim_{\mu\to0}\tline_{\mu,t}w_{\mu,0}= \tline_t w_0$, 
where $w_{\mu,0}$ and $w_0$ are introduced in \rfb{initial}. 
We first show that for every $\zeta_0\in\mathbb{H}_{\frac{1}{2}}$, we have 
\vspace{-1.5mm}
\begin{equation}\label{last}
 \lim_{\mu\to0}\left(\frac{1}{\mu}A_\mu\right)
  ^{1/2}\zeta_0 = A_0^{1/2}\zeta_0\qquad \text{in}\quad H. \vspace{-1mm}
\end{equation} 
Since $\frac{1}{\mu}A_\mu$ and $A_0$ are diagonalizable we have
$$ \left(\frac{1}{\mu}A_\mu\right)^{1/2}\zeta_0 - A_0^{1/2}\zeta_0
=\sum_{k\in\nline}k\m I_\mu(k)\langle\zeta_0,\varphi_k
\rangle\varphi_k,$$
with 
$$ I_\mu(k)=\left(\frac{\tanh(\sqrt{\mu}k)}{\sqrt{\mu}k}
   \right)^{1/2}-1. $$
Just like estimating $F_\mu(k)$ in Lemma \ref{u=0},
we use the similar argument here and obtain that $|I_\mu(k)|\leq \sqrt{\mu}k$. 
It implies that for every $\bar \zeta\in \mathbb{H}_1$,
$$ \left\lVert\left(\frac{1}{\mu}A_\mu\right)^{1/2}\bar \zeta-A_0^{1/2} 
   \bar \zeta\m\right\rVert^2\leq \mu\m\lVert\bar \zeta
   \rVert_{\mathbb{H}_1}^2.$$
Note that embedding $\mathbb{H}_1\hookrightarrow \mathbb{H}_{\frac{1}{2}}$ is dense 
and continuous, for every $\varepsilon>0$, there exists $\bar{\zeta}
\in\mathbb{H}_1$, such that $\left\lVert\bar \zeta-\zeta_0\right\rVert_{\mathbb{H}
_{\frac{1}{2}}}<\frac{\varepsilon}{3}$. 
Moreover, it is not difficult to check that \vspace{-1mm}
$$ \left(\frac{1}{\mu}A_\mu\right)^{1/2}\in\Lscr\left(\mathbb{H}_{\frac{1}{2}},
   \mathbb{H}\right),\quad  A_0^{1/2}\in \Lscr\left(\mathbb{H}_{\frac{1}{2}},\mathbb{H}
   \right), \vspace{-1mm}$$ 
and their operator norms are uniformly bounded. We thus have the following estimate
\begin{multline*}
\left\lVert\left(\frac{1}{\mu}A_\mu\right)^{1/2}\zeta_0 - A_0^{1/2}\zeta_0 
\right\rVert\leq \left\lVert \left(\frac{1}{\mu}A_\mu\right)^{1/2}\left(
\zeta_0-\bar \zeta\right)\right\rVert\\
+\left\lVert \left(\frac{1}{\mu}A_\mu\right)^{1/2}\bar \zeta-A_0^{1/2}
\bar \zeta \right\rVert+\left\lVert A_0^{1/2}(\bar{\zeta}-\zeta_0)\right\rVert.
\end{multline*}
Hence, for every $\varepsilon>0$, there exists $\mu_0=\left(\frac{\varepsilon}{3}
\right)^2$, such that for every $ \mu<\mu_0$, we have
$$\left\lVert\left(\frac{1}{\mu}A_\mu\right)^{1/2}\zeta_0 - A_0^{1/2}\zeta_0 
\right\rVert\leq C\varepsilon. $$

Now we estimate the following difference by using a trangle inequality, 
\vspace{-0.5mm}
\begin{equation*}
  \left\lVert\tline_{\mu,t}\m w_{\mu,0}-\tline_t\m w_0\right\rVert_X
  \leq \left\lVert\tline_{\mu,t}\m w_{\mu,0}-\tline_{\mu,t}\m w_0\right
  \rVert_X+\left\lVert\tline_{\mu,t}\m w_0-\tline_t\m w_0\right\rVert_X.
\end{equation*}
Note that $\tline_{\mu,t}$ is unitary and \vspace{-1mm}
$$\left\lVert\tline_{\mu,t}\m w_{\mu,0}-\tline_{\mu,t}\m w_0\right\rVert_X
  \leq\lVert\tline_{\mu,t}\rVert_{\Lscr(X)}\left\lVert
  \left(\frac{1}{\mu}A_\mu\right)^{1/2}\zeta_0 - A_0^{1/2}\zeta_0
  \right\rVert, \vspace{-0.5mm}$$
we have $ \displaystyle\lim_{\mu\to0}\tline_{\mu,t}\m (w_{\mu,0}-\m w_0)=0$ 
for every $\zeta_0\in\mathbb{H}_{\frac{1}{2}}$ and $\zeta_1\in\mathbb{H}$.
This, together with \rfb{1}, implies that $\displaystyle\lim_{\mu\to0}
\tline_{\mu,t}\m w_{\mu,0}=\tline_t\m w_0$ in $X$. We thus achieve that 
$\displaystyle\lim_{\mu\to0}w_\mu=w$ in $X$ (that is, for every $t\geq0$,
$\displaystyle\lim_{\mu\to0}\alpha_\mu= \alpha$ and 
$\displaystyle\lim_{\mu\to0}\beta_\mu=\beta$ hold in $H$). \vspace{1mm}

{\bf Step 3}: {We prove that $\displaystyle\lim_{\mu\to0}\zeta_\mu=\zeta$ 
in $C^1([0,\tau];H)\m\cap \m C([0,\tau];\Hscr_{\frac{1}{2}})$.} 
Recalling the definition of $\alpha_\mu$, $\beta_\mu$ in \rfb{alphamu}, 
and $\alpha$, $\beta$ in \rfb{alpha}, we need to translate the 
convergence results in Step 2 in form of the original variables $\zeta_\mu$ 
and $\zeta$. According to Leibniz formula, we obtain from the first 
convergence, $\displaystyle\lim_{\mu\to0}\alpha_\mu= \alpha$ in $H$, that 
\begin{equation*}
\lim_{\mu\to0}\sup_{t\in[0,\tau]}\lVert\zeta_\mu-\zeta\rVert\leq 
\lim_{\mu\to0}\tau \sup_{t\in[0,\tau]}
\lVert \alpha_\mu-\alpha\rVert.
\end{equation*}
We thus arrive at $\displaystyle\lim_{\mu\to0}\zeta_\mu=\zeta$ in 
$C^1([0,\tau];H)$. Moreover, taking the second convergence 
$\displaystyle\lim_{\mu\to0}\beta_\mu=\beta$ into account we further have 
\begin{equation}\label{-1}
  \lim_{\mu\to0}\left[I+\left(\frac{1}{\mu}A_\mu\right)^{1/2}\right] 
  \zeta_\mu = (I+A_0^{1/2})\zeta\qquad \text{in}\quad H.
\end{equation}
Notice that, for every $x\geq0$, $\frac{\tanh x}{x}\sim \frac{1}{1+x}$ (that 
is, each function can be controlled by the other one multiplied by a positive 
constant), we obtain that
$$R\left(-1:\left(\frac{1}{\mu}A_\mu\right)^{1/2}\right)\in
\Lscr(H,\Hscr_{\frac{1}{2}}),$$
and its operator norm is uniformly bounded. It follows from \rfb{-1} that 
$$ \lim_{\mu\to0}\left[\zeta_\mu+R\left(-1:\left(\frac{1}{\mu}A_\mu
   \right)^{1/2}\right)
   \left(I+A_0^{1/2}\right)\zeta\right]= 0 \qquad \text{in}\quad 
   \Hscr_{\frac{1}{2}}. $$
Furthermore, we have \vspace{-1mm}
\begin{multline}\label{III}
  \lVert \zeta_\mu-\zeta\rVert_{\Hscr_{\frac{1}{2}}}\leq \left\lVert \zeta_\mu+
  R\left(-1:\left(\frac{1}{\mu}A_\mu\right)^{1/2}\right)\left(I+A_0^{1/2}\right)
  \zeta\m\right\rVert_{\Hscr_{\frac{1}{2}}}\\
  +\left\lVert R\left(-1:\left(\frac{1}{\mu}A_\mu\right)^{1/2}\right)
  \left(I+A_0^{1/2}\right)\zeta+\zeta\m\right\rVert_{\Hscr_{\frac{1}{2}}}. 
  \vspace{-1mm}
\end{multline}
Observing that it remains to prove that the second norm on the right side 
of \rfb{III} converges to zero, we estimate its square, i.e.
$$ \sum_{k\in\nline} k\m |J_\mu(k)|^2|\langle \zeta, 
  \varphi_k\rangle|^2, \vspace{-1mm}$$
where 
$$ J_\mu(k)=\frac{1+k}{1+k\left(\frac{\tanh(\sqrt{\mu}k)}{\sqrt{\mu}k}
	\right)^{1/2}}-1.$$
Still using the function $h_{\mu,k}$ defined in the proof of Lemma 
\ref{A-B}, we have 
$$\left|J_\mu(k)\right|=\frac{k\left(1-h_{\mu,k}\right)}{1+k\m h_{\mu,k}},$$
since $h_{\mu,k}\in(0,1)$. If $\sqrt{\mu}k \leq\delta<1$, there exists 
$c>0$ such that $h_{\mu,k}\geq c$. According to the Taylor expansion of 
$h_{\mu,k}$, we obtain that $1-h_{\mu,k}\leq C\sqrt{\mu}k$, so that 
$$|J_{\mu}(k)|\leq C\sqrt{\mu}k\leq C\mu^{1/4}k^{1/2}.$$
If $\sqrt{\mu}k>\delta$, we have $h_{\mu,k}\geq\frac{C}{\mu^{1/4}k^{1/2}}$, 
which clearly implies that $|J_\mu(k)|\leq C \mu^{1/4}k^{1/2}$. Hence we have
$$ \sum_{k\in\nline} k\m |J_\mu(k)|^2|\langle \zeta, 
   \varphi_k\rangle|^2\leq \sqrt{\mu}\m\lVert\zeta
   \rVert_{\mathbb{H}_{\frac{1}{2}}}^2\leq C \sqrt{\mu}, \vspace{-1mm}$$
where we used $\zeta\in C([0,\infty); \mathbb{H}_{\frac{1}{2}}) $ for every 
$\zeta_0\in \mathbb{H}_{\frac{1}{2}}$ and $\zeta_1\in\mathbb{H} $. 

Therefore, we finish the proof of Theorem \ref{result}.
\end{proof}

\begin{remark}
{\rm The scattering semigroup $(\mathfrak{T}_t)_{t\geq0} $ used in Step 1 
is actually a part of the so-called {\em Lax-Philips semigroup} of index 
$\omega$ introduced in \cite{StWe}.  }
\end{remark}

\section{Conclusion}
In this work, we studied the dispersive limit of the linearized water waves 
equation in a rectangle where the fluid domain is actuated by a wave maker 
from the lateral boundary. To achieve this, we first introduced the dimensionless
Dirichlet to Neumann map and the Neumann to Neumann map, so that the governing 
equations of the linear water waves system can be recast to a second-order 
evolution equation in terms of the elevation of the free surface. Secondly,
we used a special change of variables to obtain a new equivalent system which
possesses the same state space with the wave equation. Finally, we employed a 
scattering semigroup to justify the limit from the linear water waves equation 
to the one dimensional wave equation with Neumann boundary control by using the 
famous Trotter-Kato approximation theorem.

We will discussed in the coming paper the strong well-posedness of the linear 
water waves model and the corresponding asymptotic behaviour of the solution.
In a general bounded convex domain, the well-posedness of the linear water waves 
system with a control is still open. At least, the construction of the Dirichlet 
to Neumann operator can be the first step to start this topic.

\section*{Acknowledgements}

The author would like to sincerely thank her supervisors Marius Tucsnak 
and David Lannes (from Universit\'e de Bordeaux) for continuous encouragement 
and for valuable advice on this paper. The author also wants to thank Florent 
Noisette (PhD student in Universit\'e de Bordeaux) for helpful discussion
in the proof of the last step of Lemma 4.2.

\bibliographystyle{elsarticle-num}
\bibliography{Su_Tucsnak}

\end{document}